\newtheorem{pro}{Proposition}[section]
\newtheorem{teo}[pro]{Theorem}
\newtheorem{obs}[pro]{Remarks}
\newtheorem{ob}[pro]{Remark}
 \newtheorem{cor}[pro]{Corollary}
\newtheorem{defin}[pro]{Definition}
\newtheorem{nton}[pro]{Notation}
\author{Fabrizio Colombo\\Politecnico di
Milano\\Dipartimento di Matematica\\Via E. Bonardi, 9\\20133 Milano,
Italy\\fabrizio.colombo@polimi.it
\and
J. Oscar Gonz\'alez-Cervantes\\
 Departamento de Matem\'aticas  \\
 E.S.F.M. del
I.P.N. 07338 \\
M\'exico D.F., M\'exico
\\
jogc200678@gmail.com
\and
Irene Sabadini\\Politecnico di
Milano\\Dipartimento di Matematica\\Via E. Bonardi, 9\\20133 Milano,
Italy\\irene.sabadini@polimi.it }
\date{  }
\begin{document}
\maketitle

\begin{abstract}
This paper has a twofold purpose: on one hand we deepen the study of slice regular functions by studying their behavior
with respect to the so-called C-property and anti-C-property. We  show that, for any fixed basis of the algebra of quaternions $\mathbb{H}$  any slice regular function decomposes into the sum of four slice regular components each of them satisfying the C-property.
Then, we will use these results to show a reproducing property  of the Bergman kernels of  the second kind.
\end{abstract}
AMS Classification: 30G35.

\medskip
\noindent {\em Key words}:
Slice regular functions, C-property, Bergman kernel.
\section{Introduction}
In this paper we continue the study of the Bergman theory in the setting of slice regular functions started in \cite{CGESS}.  After their definition in \cite{advances} which has been generalized to the case of functions with values in a Clifford algebra in \cite{slicecss}, these functions have been intensively studied during the past years and their theory, as well as the application to the functional calculus, is treated in the book \cite{cssbook}.
In \cite{CGESS} we showed that the Bergman theory admits two possible formulations in this framework: the so-called formulations of the first and of the second kind.
In both cases, the open sets $\Lambda$ on which we construct the theory should be suitable domains.  The two formulations differ since the first relies on an integral formula computed on $\Omega$ while in the second case the integral is computed on the intersection of $\Lambda$ with a complex plane $\mathbb{C}_{\bf i}=\{z=x+\mathbf{i}y,\ x,y\in\mathbb{R}\}$. The Bergman spaces in the two formulations are equipped with different inner products.
Thus, it is natural to ask whether the two theories can be related and in this paper we show that the Bergman kernel of the second kind, which is characterized by a reproducing property on the complex slices, can reproduce a function on an open set in $\mathbb{H}$ by means of a suitable operator that we denote by $M_{\bf i}$ which is studied in the last section of the article. The technical tools that we use are based on some properties of slice regular functions which depend on their behavior with respect to the quaternionic conjugation. These properties, in turn, are related to some analogous properties of holomorphic functions. More precisely, let $\Omega$ be an open set in the complex plane which is invariant under conjugation, i.e. such that $\bar z\in\Omega$ for all $z\in\Omega$. Then, we consider complex valued functions $f$ defined on $\Omega$ such that $f(\bar z)=\overline{f(z)}$ or such that $f(\bar z)=-\overline{f(z)}$. In the first case, we say that the function $f$ satisfies the C-property, ($f$ is also called intrinsic function, see \cite{gp}). In the second case we say that $f$ satisfy the anti-C-property  ("C" stands for conjugation).
These properties can be formulated also in the quaternionic setting by considering the quaternionic conjugation.

We will show that holomorphic functions satisfying the C-property or anti-C-property can be extended to slice regular functions with the analogous property. Moreover, we will show a refined version of the Splitting lemma which allow to decompose the restriction of slice regular function to a complex plane into a sum of four (intrinsic) holomorphic functions. Moreover, we prove that for any fixed basis of the algebra of quaternions $\mathbb{H}$, any slice regular function decomposes canonically into the sum of four slice regular functions satisfying the C-property (intrinsic, for short).
Finally, we study the subsets of the Bergman spaces with respect to the C-property or anti-C-property. We decompose the
Bergman kernel of the second kind in its four slice regular intrinsic components and we show their reproducing property. Finally, we discuss the relations among integral representation computed on  {an} open set in $\mathbb{H}$ and on the slices.
We finally mention the classical references for the Bergman theory in the complex case \cite{berg, bgsc}
and for the sake of completeness some papers that treat the Bergman theory in the hypercomplex setting
\cite{bradel, bds, const, conskrau, conskrau2, delanghe, SG2, shavas, shavas2, shavas3}.

\section{The holomorphic Bergman space and the C-property}
In this section we will work with the  {Bergman theory in one complex variable}. We start by recalling some notations and then we will discuss the so called C-property. Let $\Omega$ be an open set of the complex plane which is invariant under conjugation. A function $f$ defined on $\Omega$ is said to have the C-property if $f(\bar z)=\overline{f(z)}$ while it is said to have the anti-C-property if $f(\bar z)=-\overline{f(z)}$.  It can be shown that any complex function decomposes into the sum of two functions $f_1(z)$ and $f_2(z)$ having the C-property. An analogous decomposition holds with two functions  {satisfying}  the anti-C-property.
This fact has some consequences at the level of the Bergman space. We discuss some immediate consequences of this decomposition, by inserting the proofs of our results for the sake of completeness.

\begin{defin}
 Let us denote the complex conjugation  on $\mathbb C$ by $Z_{\mathbb C}(z)=\bar z,\quad \forall z\in \mathbb C$. We will say that a domain $\Omega\subset\mathbb C$ is Z-invariant if and only if  $Z_{\mathbb C}(\Omega)=\Omega$.
\end{defin}

\begin{defin}
Let $\Omega\subseteq\mathbb{C}$ be an open set.
 The space of holomorphic functions on $\Omega$ is denoted by $Hol(\Omega)$, while   by $\mathcal A^2(\Omega)$, and $\mathcal K_\Omega(\cdot,\cdot)$ we mean, respectively,  the  usual complex Bergman space and its Bergman kernel, both associated to $\Omega$.
\end{defin}

In this section, the domains $\Omega$ we consider are supposed to  { be  Z-invariant}. Note that  using the Cauchy-Riemann equations, one obtains  that   $f\in Hol(\Lambda)$  if and only if  $Z_{\mathbb C}\circ f\circ Z_{\mathbb C}\in Hol(\Lambda)$.

Let $\mathbb B^2$ be the unit disk. In particular,  by the Bergman theory,  we have that
$$f(z)=\int_{\mathbb B^2} \frac{1}{\pi(1- \bar z\zeta)^2 }f(\zeta)     {d\sigma_{\zeta}},$$
for any  $f\in \mathcal A^2(\mathbb B^2)$ and $z\in \mathbb B^2$. By setting $ \zeta= \bar v$,  $z=\bar w$ one obtains that
$$\overline{ f(\bar w)}=\int_{\mathbb B^2} \frac{1}{\pi(1-  \bar w  v )^2 }\overline{  f(\bar v)} d\sigma_v ,$$
that is $Z_{\mathbb C}\circ f\circ Z_{\mathbb C}\in \mathcal A^2(\mathbb B^2)$.

In general, as the complex conjugation $Z_\mathbb C$ preserves the norm  of the complex numbers and the differential element of area,  then $f\in \mathcal A^2(\Omega)$  if and only if
$Z_{\mathbb C}\circ f\circ Z_{\mathbb C}\in \mathcal A^2(\Omega)$.

Moreover, we have
$$
  \int_\Omega | f(\zeta)|^2 d\sigma_{\zeta} =  \int_\Omega | Z_{\mathbb C}\circ f\circ Z_{\mathbb C}(\zeta)   |^2 d\sigma_{\zeta} , \quad \forall f\in  \mathcal A^2(\Omega).$$

\begin{defin} Let $\Omega\subseteq\mathbb{C}$ be a Z-invariant open set. We define the following sets:
$$Hol_c(\Omega):=\{f\in Hol(\Omega) \ \mid \ Z_{\mathbb C}\circ f\circ Z_{\mathbb C}=f\},$$
$$Hol_{\bar c}(\Omega):=\{f\in Hol(\Omega) \ \mid \ -Z_{\mathbb C}\circ f\circ Z_{\mathbb C}=f\},$$
$$\mathcal A_{c}^2(\Omega):= \{f\in \mathcal A^2(\Omega) \ \mid   f= Z_{\mathbb C}\circ f\circ Z_{\mathbb C} \} ,$$
$$\mathcal A_{\bar c}^2(\Omega):= \{f\in \mathcal A^2(\Omega) \ \mid  f=-  Z_{\mathbb C}\circ f\circ Z_{\mathbb C} \} ,$$
and we will say that a holomorphic function  $f$ defined on   $\Omega$ satisfies the C-property  if and only if $f\in Hol_{c}(\Omega)$. We will say that a holomorphic function  $f$ defined on   $\Omega$ satisfies the anti-C-property  if and only if $f\in Hol_{\bar c}(\Omega)$.

\end{defin}

\begin{ob}\label{ob1} \ {} {\rm
\begin{enumerate}
  \item Note that $Hol_{c}(\Omega) $ and $Hol_{\bar c}(\Omega) $ are real-linear spaces  { of $\mathbb{C}$-valued functions}.  Since for any $f,g\in Hol_{c}(\Omega)$ the following property holds
$$
Z_{\mathbb C}\circ (f \lambda + g)\circ Z_{\mathbb C}=Z_{\mathbb C}\circ f \circ Z_{\mathbb C}\lambda + Z_{\mathbb C}\circ  g\circ Z_{\mathbb C}= f \lambda + g,
$$
for any $\lambda \in \mathbb R$. A similar reasoning can be used for $Hol_{\bar c}(\Omega)$.\\

Moreover,  the sets  $\mathcal A_{c}^2(\Omega)$ and $\mathcal A_{\bar c}^2(\Omega)$ equipped with the norm inherited by $\mathcal A^2(\Omega)$ are real-linear {spaces of $\mathbb C$-valued functions}.

  \item Any polynomial  with real coefficients belongs to $\mathcal A_{c}^2(\Omega)$. On the other hand,  any polynomial  whose coefficients are  pure imaginary complex numbers belongs to $\mathcal A_{\bar c}^2(\Omega)$.

  \item Finally, it is easy to see that the mapping $f\mapsto i f$ is an isomorphism  {between} the real-linear spaces $Hol_{c}(\Omega)$ and $Hol_{\bar c}(\Omega)$. The restriction of this mapping to the Bergman space is an isometric isomorphisms between     $\mathcal A_c^2(\Omega)$ and  $\mathcal A_{\bar c}^2(\Omega)$.

\end{enumerate}
}
\end{ob}

 {The proof  of the following result is given in \cite{global}. }
\begin{pro}
\label{ob3}
Let $\Omega\subseteq\mathbb{C}$ be a Z-invariant open set.
Given any  $f\in Hol(\Omega) $, there exists a unique pair of functions $f_1\in  Hol_{c}(\Omega)$ and $f_2\in Hol_{\bar c}(\Omega)$ such that $f=f_1 + f_2$.
\end{pro}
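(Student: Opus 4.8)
The plan is to exhibit the decomposition explicitly via the natural involution $f\mapsto Z_{\mathbb C}\circ f\circ Z_{\mathbb C}$ and then obtain uniqueness from an eigenspace/intersection argument. First I would set
$$f_1:=\tfrac12\left(f+Z_{\mathbb C}\circ f\circ Z_{\mathbb C}\right),\qquad f_2:=\tfrac12\left(f-Z_{\mathbb C}\circ f\circ Z_{\mathbb C}\right),$$
so that $f_1+f_2=f$ by construction. Since $\Omega$ is $Z$-invariant, $Z_{\mathbb C}\circ f\circ Z_{\mathbb C}$ is well defined on $\Omega$, and by the remark recalled before the statement (a consequence of the Cauchy--Riemann equations) it again belongs to $Hol(\Omega)$; hence $f_1,f_2\in Hol(\Omega)$.

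Next I would check the symmetry properties. Using $Z_{\mathbb C}\circ Z_{\mathbb C}=\mathrm{id}$ on $\Omega$, one gets
$$Z_{\mathbb C}\circ f_1\circ Z_{\mathbb C}=\tfrac12\left(Z_{\mathbb C}\circ f\circ Z_{\mathbb C}+f\right)=f_1,$$
so $f_1\in Hol_c(\Omega)$, and likewise
$$Z_{\mathbb C}\circ f_2\circ Z_{\mathbb C}=\tfrac12\left(Z_{\mathbb C}\circ f\circ Z_{\mathbb C}-f\right)=-f_2,$$
so $f_2\in Hol_{\bar c}(\Omega)$. This settles existence.

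For uniqueness, suppose $f=f_1+f_2=g_1+g_2$ with $g_1\in Hol_c(\Omega)$ and $g_2\in Hol_{\bar c}(\Omega)$. Then $h:=f_1-g_1=g_2-f_2$ lies in $Hol_c(\Omega)\cap Hol_{\bar c}(\Omega)$, both being real-linear spaces by Remark \ref{ob1}. Any $h$ in this intersection satisfies simultaneously $h=Z_{\mathbb C}\circ h\circ Z_{\mathbb C}$ and $h=-Z_{\mathbb C}\circ h\circ Z_{\mathbb C}$, whence $h=-h$ and therefore $h\equiv 0$; thus $f_1=g_1$ and $f_2=g_2$.

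I expect no genuine obstacle here: the only place the hypothesis on $\Omega$ enters is the well-definedness and holomorphy of $Z_{\mathbb C}\circ f\circ Z_{\mathbb C}$, which is precisely the fact recorded just before the statement. Everything else is the elementary linear algebra of the order-two $\mathbb R$-linear map $f\mapsto Z_{\mathbb C}\circ f\circ Z_{\mathbb C}$ on $Hol(\Omega)$, whose $(+1)$- and $(-1)$-eigenspaces are exactly $Hol_c(\Omega)$ and $Hol_{\bar c}(\Omega)$, and $f_1$, $f_2$ are the corresponding spectral projections of $f$.
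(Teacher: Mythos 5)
Your proof is correct and complete. The paper itself defers this proposition to the reference \cite{global}, but the decomposition you use is exactly the standard symmetrization that the paper writes out explicitly in the proof of Corollary \ref{pro0} (where $i f_2$ there equals your $f_2$), so the approach is essentially the same.
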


 As  immediate corollaries we obtain:
\begin{cor} Let $\Omega\subseteq\mathbb{C}$ be a Z-invariant open set. We have
$$
 Hol(\Omega) = Hol_c(\Omega) \oplus Hol_{\bar c}(\Omega)
$$
and, analogously,
$$
\mathcal A^2(\Omega)=\mathcal A_{c}^2(\Omega)\oplus\mathcal A_{\bar c}^2(\Omega).
$$
\end{cor}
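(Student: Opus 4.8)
The plan is to read off both identities from Proposition \ref{ob3} together with the remarks collected just before it. Recall that the map $P\colon f\mapsto Z_{\mathbb C}\circ f\circ Z_{\mathbb C}$ is a real-linear involution on $Hol(\Omega)$ (this is precisely the remark, based on the Cauchy--Riemann equations, that $f\in Hol(\Omega)$ if and only if $Z_{\mathbb C}\circ f\circ Z_{\mathbb C}\in Hol(\Omega)$), whose $(+1)$-eigenspace is $Hol_c(\Omega)$ and whose $(-1)$-eigenspace is $Hol_{\bar c}(\Omega)$. For the first identity I would first note that Proposition \ref{ob3} gives $Hol(\Omega)=Hol_c(\Omega)+Hol_{\bar c}(\Omega)$ as real-linear spaces, since every $f$ splits as $f=f_1+f_2$ with $f_1\in Hol_c(\Omega)$ and $f_2\in Hol_{\bar c}(\Omega)$. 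It then remains to check that the sum is direct, i.e. $Hol_c(\Omega)\cap Hol_{\bar c}(\Omega)=\{0\}$: if $f$ lies in both sets then $f=Z_{\mathbb C}\circ f\circ Z_{\mathbb C}=-f$, hence $2f=0$ and $f=0$. (Equivalently, this is the uniqueness half of Proposition \ref{ob3}.) This proves $Hol(\Omega)=Hol_c(\Omega)\oplus Hol_{\bar c}(\Omega)$.

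For the Bergman space identity I would use the explicit form of the components produced by Proposition \ref{ob3}, namely
\[
f_1=\tfrac12\bigl(f+Z_{\mathbb C}\circ f\circ Z_{\mathbb C}\bigr),\qquad
f_2=\tfrac12\bigl(f-Z_{\mathbb C}\circ f\circ Z_{\mathbb C}\bigr),
\]
which are immediately seen to be fixed, resp. anti-fixed, by $P$. Now if $f\in\mathcal A^2(\Omega)$, then by the norm-preservation remark ($Z_{\mathbb C}$ preserves the modulus and the area element, so $f\in\mathcal A^2(\Omega)$ if and only if $Z_{\mathbb C}\circ f\circ Z_{\mathbb C}\in\mathcal A^2(\Omega)$) together with the fact that $\mathcal A^2(\Omega)$ is a linear space, both $f_1$ and $f_2$ belong to $\mathcal A^2(\Omega)$; combined with $f_1\in Hol_c(\Omega)$ and $f_2\in Hol_{\bar c}(\Omega)$ this yields $f_1\in\mathcal A^2_c(\Omega)$ and $f_2\in\mathcal A^2_{\bar c}(\Omega)$. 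Hence $\mathcal A^2(\Omega)=\mathcal A^2_c(\Omega)+\mathcal A^2_{\bar c}(\Omega)$, and the sum is direct because $\mathcal A^2_c(\Omega)\cap\mathcal A^2_{\bar c}(\Omega)\subseteq Hol_c(\Omega)\cap Hol_{\bar c}(\Omega)=\{0\}$.

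There is essentially no serious obstacle here: the statement is a repackaging of Proposition \ref{ob3}. The only points that require (minimal) care are keeping track that these are direct sums of real-linear, not complex-linear, spaces (the decomposition is not $\mathbb C$-linear, since multiplication by $i$ interchanges the two summands, cf. Remark \ref{ob1}), and, for the second identity, invoking the norm-preservation remark to be sure that the holomorphic components of an $\mathcal A^2$-function are again square-integrable. If desired, one can add that the sum in $\mathcal A^2(\Omega)$ is orthogonal with respect to the real part of the Bergman inner product: for $f\in\mathcal A^2_c(\Omega)$ and $g\in\mathcal A^2_{\bar c}(\Omega)$, the substitution $\zeta\mapsto\bar\zeta$ in $\int_\Omega f(\zeta)\overline{g(\zeta)}\,d\sigma_\zeta$ shows that this integral equals minus its own complex conjugate, hence is purely imaginary.
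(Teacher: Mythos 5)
Your proof is correct and follows the same route as the paper, which states this corollary as an immediate consequence of Proposition \ref{ob3} without further argument; you simply fill in the (routine) details, namely the triviality of $Hol_c(\Omega)\cap Hol_{\bar c}(\Omega)$ and the fact that the explicit components $\tfrac12(f\pm Z_{\mathbb C}\circ f\circ Z_{\mathbb C})$ remain square-integrable thanks to the norm-preservation remark. The closing observation on real-orthogonality is a correct bonus and matches item 3 of Proposition \ref{pro3}.
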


\begin{cor}\label{pro0}
Let $\Omega\subseteq\mathbb{C}$ be a Z-invariant open set.
  Given any $f\in Hol(\Omega)$ there  {exists} a unique pair $f_1, f_2\in Hol_{c}(\Omega)$ such that
  $$f=f_1+i f_2,$$
  i.e.
\begin{equation}\label{hol-direct-sum}
 Hol(\Omega) = Hol_c(\Omega) \oplus i Hol_{c}(\Omega)
\end{equation}
and, analogously,
\begin{equation}\label{Bergman-direct-sum}
\mathcal A^2(\Omega)=\mathcal A_{c}^2(\Omega)\oplus i\mathcal A_{c}^2(\Omega).
\end{equation}
\end{cor}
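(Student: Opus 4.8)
The plan is to derive everything from Proposition~\ref{ob3} together with the isomorphism $g\mapsto ig$ recorded in part~3 of Remark~\ref{ob1}. Given $f\in Hol(\Omega)$, apply Proposition~\ref{ob3} to write $f=g_1+g_2$ with $g_1\in Hol_c(\Omega)$, $g_2\in Hol_{\bar c}(\Omega)$, the pair $(g_1,g_2)$ being uniquely determined by $f$. Then set $f_1:=g_1$ and $f_2:=-ig_2$. The key check is that $f_2\in Hol_c(\Omega)$: since $Z_{\mathbb C}\circ g_2\circ Z_{\mathbb C}=-g_2$, one computes for $z\in\Omega$
$$
(Z_{\mathbb C}\circ f_2\circ Z_{\mathbb C})(z)=\overline{-i\,g_2(\bar z)}=i\,\overline{g_2(\bar z)}=i\,(Z_{\mathbb C}\circ g_2\circ Z_{\mathbb C})(z)=-i\,g_2(z)=f_2(z),
$$
which is exactly the statement that $g\mapsto -ig$ inverts the isomorphism of Remark~\ref{ob1} and carries $Hol_{\bar c}(\Omega)$ onto $Hol_c(\Omega)$. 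Since $g_2=if_2$, this yields $f=f_1+if_2$ with $f_1,f_2\in Hol_c(\Omega)$; explicitly $f_1=\tfrac12\bigl(f+Z_{\mathbb C}\circ f\circ Z_{\mathbb C}\bigr)$ and $f_2=\tfrac{i}{2}\bigl(Z_{\mathbb C}\circ f\circ Z_{\mathbb C}-f\bigr)$.

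For uniqueness, suppose $f=f_1+if_2=\tilde f_1+i\tilde f_2$ with all four functions in $Hol_c(\Omega)$. By Remark~\ref{ob1}(3), $if_2$ and $i\tilde f_2$ lie in $Hol_{\bar c}(\Omega)$, so both $f_1+if_2$ and $\tilde f_1+i\tilde f_2$ are decompositions of $f$ of the type provided by Proposition~\ref{ob3}; its uniqueness forces $f_1=\tilde f_1$ and $if_2=i\tilde f_2$, hence $f_2=\tilde f_2$. The same reasoning shows $Hol_c(\Omega)\cap iHol_c(\Omega)=\{0\}$: if $g=ih$ with $g,h\in Hol_c(\Omega)$, then $g\in Hol_c(\Omega)\cap Hol_{\bar c}(\Omega)$, so $g=-g=0$. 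Thus \eqref{hol-direct-sum} is a genuine direct sum of real-linear spaces of $\mathbb C$-valued functions.

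The Bergman-space identity \eqref{Bergman-direct-sum} follows by repeating the argument inside $\mathcal A^2(\Omega)$: for $f\in\mathcal A^2(\Omega)$ the components $g_1,g_2$ of Proposition~\ref{ob3} are again in $\mathcal A^2(\Omega)$ because $Z_{\mathbb C}\circ f\circ Z_{\mathbb C}\in\mathcal A^2(\Omega)$ whenever $f\in\mathcal A^2(\Omega)$ (as recalled before the definition of $\mathcal A^2_c(\Omega)$), so $g_1\in\mathcal A^2_c(\Omega)$ and $g_2\in\mathcal A^2_{\bar c}(\Omega)$; then $f_2=-ig_2\in\mathcal A^2_c(\Omega)$, and multiplication by $i$ is moreover an isometry by Remark~\ref{ob1}(3). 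I do not expect a real obstacle here: the only care needed is to keep the two conjugation conditions straight and to invoke the uniqueness in Proposition~\ref{ob3} at the right moment; the rest is formal manipulation with $Z_{\mathbb C}$ and the scalar $i$.
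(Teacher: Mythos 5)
Your proof is correct and follows essentially the same route as the paper: the paper's proof simply writes down the explicit formulas $f_1=\tfrac12(f+Z_{\mathbb C}\circ f\circ Z_{\mathbb C})$ and $f_2=\tfrac{i}{2}(-f+Z_{\mathbb C}\circ f\circ Z_{\mathbb C})$, which are exactly your $g_1$ and $-ig_2$ obtained from Proposition~\ref{ob3} and the isomorphism of Remark~\ref{ob1}(3). You merely spell out the verifications (membership in $Hol_c(\Omega)$, uniqueness, the Bergman case) that the paper leaves implicit.
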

\begin{proof} Set
 $$f_1 = \frac{1}{2}(f+ Z_{\mathbb C}\circ f\circ Z_{\mathbb C}), \quad f_2= \frac{i}{2}(-f+ Z_{\mathbb C}\circ f\circ Z_{\mathbb C}).$$
\end{proof}

We now list some properties of the elements of $\mathcal A_c^2(\Omega)$.
\begin{pro}\label{pro3}
Let $\Omega\subseteq\mathbb{C}$ be a Z-invariant open set.
 \begin{enumerate}
   \item   Let  $f\in \mathcal A_c^2(\Omega)$, then  $\displaystyle \int_{\Omega} f(\zeta)d\sigma_{\zeta} \in \mathbb R $.
   \item  Let  $f\in \mathcal A^2(\Omega)$   and let $f_1,f_2\in \mathcal A_c^2(\Omega)$ such that
       $f=f_1+ i f_2$. Then
       \begin{enumerate}
         \item $\displaystyle \int_{\Omega} f_1(\zeta)d\sigma_{\zeta}=\int_{\Omega} Re \; f(\zeta)d\sigma_{\zeta}$ {} and {}
         $\displaystyle \int_{\Omega} f_2(\zeta)d\sigma_{\zeta}=\int_{\Omega} Im \; f(\zeta)d\sigma_{\zeta},$

         where $Re \; f$ and $Im \; f$ are, respectively, the real and the imaginary parts of $f$.

         \item $\displaystyle\left. \begin{array}{l} \|f_1\|_{\mathcal A^2(\Omega)}, \\\|f_2\|_{\mathcal A^2(\Omega)}\end{array} \right\}\leq \|f\|_{\mathcal A^2(\Omega)}\leq \|f_1\|_{\mathcal A^2(\Omega)}+ \|f_2\|_{\mathcal A^2(\Omega)}$.
                 \end{enumerate}

                 \item  Let $f,g\in \mathcal A_c^2(\Omega)$ then
                 $\langle g,f \rangle_{\mathcal A^2(\Omega)} \in \mathbb R$, and let $h\in  \mathcal A_{\bar c}^2(\Omega)$ then
                 $\langle h,f \rangle_{\mathcal A^2(\Omega)}$ is a pure imaginary complex number.
              \end{enumerate}
\end{pro}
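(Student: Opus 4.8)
The engine behind all three statements is a single change of variables. Since $\Omega$ is $Z$-invariant, the reflection $\zeta\mapsto\bar\zeta$ maps $\Omega$ onto itself, and its (real) Jacobian determinant equals $-1$, so it preserves the area element $d\sigma$. Consequently, for any $d\sigma$-integrable $h\colon\Omega\to\mathbb C$ with the C-property $h(\bar\zeta)=\overline{h(\zeta)}$ one gets
$$\int_\Omega h(\zeta)\,d\sigma_\zeta=\int_\Omega h(\bar\zeta)\,d\sigma_\zeta=\int_\Omega\overline{h(\zeta)}\,d\sigma_\zeta=\overline{\int_\Omega h(\zeta)\,d\sigma_\zeta},$$
whence $\int_\Omega h\,d\sigma\in\mathbb R$; if instead $h$ has the anti-C-property $h(\bar\zeta)=-\overline{h(\zeta)}$, the same computation gives $\int_\Omega h\,d\sigma=-\overline{\int_\Omega h\,d\sigma}$, i.e.\ it is a pure imaginary number. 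Statement~1 is exactly the first case applied to $h=f\in\mathcal A_c^2(\Omega)$.

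For statement~3 the plan is to reduce to the observation above by checking that the integrand of the inner product has the right symmetry, even though it is no longer holomorphic. If $f,g\in\mathcal A_c^2(\Omega)$, then setting $h=g\overline f$ a direct computation gives $h(\bar\zeta)=g(\bar\zeta)\,\overline{f(\bar\zeta)}=\overline{g(\zeta)}\,f(\zeta)=\overline{h(\zeta)}$, so $h$ has the C-property and $\langle g,f\rangle_{\mathcal A^2(\Omega)}=\int_\Omega h\,d\sigma\in\mathbb R$. If $h\in\mathcal A_{\bar c}^2(\Omega)$ and $f\in\mathcal A_c^2(\Omega)$, the same computation for $h\overline f$ produces an extra sign, $(h\overline f)(\bar\zeta)=-\overline{(h\overline f)(\zeta)}$, so by the anti-C case above $\langle h,f\rangle_{\mathcal A^2(\Omega)}$ is pure imaginary. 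Statement~2(a) then follows formally from statement~1: writing $f=f_1+if_2$ with $f_1,f_2\in\mathcal A_c^2(\Omega)$ gives $\int_\Omega f\,d\sigma=\int_\Omega f_1\,d\sigma+i\int_\Omega f_2\,d\sigma$, where by statement~1 both $\int_\Omega f_j\,d\sigma$ are real; comparing with the decomposition $\int_\Omega f\,d\sigma=\int_\Omega\mathrm{Re}\,f\,d\sigma+i\int_\Omega\mathrm{Im}\,f\,d\sigma$ into real and imaginary parts, and using uniqueness of the latter, yields the two identities.

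Finally, statement~2(b) is obtained from a Pythagoras-type identity. Recalling from Remark~\ref{ob1} that multiplication by $i$ is an isometry of $\mathcal A^2(\Omega)$ carrying $\mathcal A_c^2(\Omega)$ onto $\mathcal A_{\bar c}^2(\Omega)$, one expands
$$\|f\|_{\mathcal A^2(\Omega)}^2=\langle f_1+if_2,\ f_1+if_2\rangle_{\mathcal A^2(\Omega)}=\|f_1\|_{\mathcal A^2(\Omega)}^2+\|f_2\|_{\mathcal A^2(\Omega)}^2+\langle f_1,if_2\rangle_{\mathcal A^2(\Omega)}+\langle if_2,f_1\rangle_{\mathcal A^2(\Omega)}.$$
Since $if_2\in\mathcal A_{\bar c}^2(\Omega)$ and $f_1\in\mathcal A_c^2(\Omega)$, statement~3 shows that $\langle if_2,f_1\rangle_{\mathcal A^2(\Omega)}$ is pure imaginary, and so is its conjugate $\langle f_1,if_2\rangle_{\mathcal A^2(\Omega)}$; hence their sum equals $2\,\mathrm{Re}\,\langle if_2,f_1\rangle_{\mathcal A^2(\Omega)}=0$. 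Therefore $\|f\|^2=\|f_1\|^2+\|f_2\|^2$, which at once gives $\|f_j\|\le\|f\|$ for $j=1,2$ and, since $\|f_1\|^2+\|f_2\|^2\le(\|f_1\|+\|f_2\|)^2$, also $\|f\|\le\|f_1\|+\|f_2\|$. I do not expect a genuine obstacle here: the only point requiring care is recognizing that the real-linear direct sum $\mathcal A^2(\Omega)=\mathcal A_c^2(\Omega)\oplus i\mathcal A_c^2(\Omega)$ is in fact orthogonal, which is precisely the content of statement~3; everything else reduces to the change of variables of the first paragraph and to elementary manipulations of the inner product.
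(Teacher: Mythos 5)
Your proof is correct and is exactly the ``direct calculation using the C-property'' that the paper's one-line proof alludes to: the change of variables $\zeta\mapsto\bar\zeta$ on the $Z$-invariant set $\Omega$, applied to the integrand or to the inner-product density $g\overline f$, yields all three items. The only point where you go slightly beyond the stated claim is in 2(b), where you actually establish the Pythagorean identity $\|f\|^2=\|f_1\|^2+\|f_2\|^2$ (i.e.\ orthogonality of the decomposition $\mathcal A^2(\Omega)=\mathcal A_c^2(\Omega)\oplus i\mathcal A_c^2(\Omega)$), which is stronger than the two inequalities asked for.
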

\begin{proof} The results are obtained by direct calculations and using the C-property.
\end{proof}

\begin{ob} Let $\Omega\subseteq\mathbb{C}$ be a Z-invariant open set. Define  the  {operation}
$$
+ : \left(\mathcal A_c^2(\Omega)\times \mathcal A_c^2(\Omega)\right) \times \left(\mathcal A_c^2(\Omega)\times \mathcal A_c^2(\Omega)\right) \to \mathcal A_c^2(\Omega)\times \mathcal A_c^2(\Omega)
$$
as follows:
  $$(f_1 \ , \ f_2)+ (g_1 \ , \ g_2)= (f_1 + g_1  \ , \   f_2 +g_2),$$
  and  the following product of a pair with a scalar $$(f_1 \ , \ f_2)\lambda=(f_1\lambda \ , \ f_2\lambda)$$ for any $\lambda\in \mathbb R$.
We obtain an  {isomorphism} between the  real-linear spaces $\mathcal A^2(\Omega) $ and  $ \mathcal A_c^2(\Omega)\times \mathcal A_c^2(\Omega)$.
\end{ob}

\begin{pro}\label{pro2}
Let $\Omega\subseteq\mathbb{C}$ be a Z-invariant open set.
  The   spaces $ \mathcal A_{c}^2(\Omega)$ and $ \mathcal A_{\bar c}^2(\Omega)$ are real Hilbert spaces.  \end{pro}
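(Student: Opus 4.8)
The plan is to exhibit $\mathcal A_c^2(\Omega)$ and $\mathcal A_{\bar c}^2(\Omega)$ as closed real-linear subspaces of $\mathcal A^2(\Omega)$ and then to invoke the classical fact that a closed subspace of a (real) Hilbert space, with the inherited inner product, is again a (real) Hilbert space.

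Recall first that $\mathcal A^2(\Omega)$ is a complex Hilbert space, hence in particular a real Hilbert space when equipped with the real inner product $\mathrm{Re}\,\langle\cdot,\cdot\rangle_{\mathcal A^2(\Omega)}$; by Remark \ref{ob1}(1) the sets $\mathcal A_c^2(\Omega)$ and $\mathcal A_{\bar c}^2(\Omega)$ are real-linear subspaces of it. Moreover, by Proposition \ref{pro3}(3) the restriction of $\langle\cdot,\cdot\rangle_{\mathcal A^2(\Omega)}$ to $\mathcal A_c^2(\Omega)$ is already $\mathbb R$-valued (and the same holds on $\mathcal A_{\bar c}^2(\Omega)$, e.g. via the isometric real-linear isomorphism $f\mapsto if$ of Remark \ref{ob1}(3)), so on each of these subspaces the inherited form is a genuine real inner product. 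It therefore remains only to prove that $\mathcal A_c^2(\Omega)$ and $\mathcal A_{\bar c}^2(\Omega)$ are closed in the norm of $\mathcal A^2(\Omega)$.

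To this end consider the map $T\colon\mathcal A^2(\Omega)\to\mathcal A^2(\Omega)$, $T(f)=Z_{\mathbb C}\circ f\circ Z_{\mathbb C}$. As recalled at the beginning of this section, $T$ sends $\mathcal A^2(\Omega)$ into itself and $\|T(f)\|_{\mathcal A^2(\Omega)}=\|f\|_{\mathcal A^2(\Omega)}$ for all $f$, so $T$ is bounded; a direct check (the same computation as in Remark \ref{ob1}(1)) shows $T(f\lambda+g)=T(f)\lambda+T(g)$ for $\lambda\in\mathbb R$, i.e. $T$ is real-linear. Hence $I-T$ and $I+T$ are bounded real-linear operators on $\mathcal A^2(\Omega)$, and by definition $\mathcal A_c^2(\Omega)=\ker(I-T)$ and $\mathcal A_{\bar c}^2(\Omega)=\ker(I+T)$. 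As kernels of continuous operators these are closed, hence complete, real-linear subspaces, and therefore real Hilbert spaces. Completeness of $\mathcal A_{\bar c}^2(\Omega)$ may alternatively be deduced from that of $\mathcal A_c^2(\Omega)$ using the isometry $f\mapsto if$.

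There is no genuine difficulty here; the only point deserving attention is that $T$ is only real-linear --- in fact conjugate-linear as a map of complex vector spaces, since $T(\lambda f)=\bar\lambda\,T(f)$ for $\lambda\in\mathbb C$ --- so that the statement, and the associated direct-sum decomposition of the preceding corollaries, must be understood in the category of real Hilbert spaces.
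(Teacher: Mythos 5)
Your proof is correct, but it reaches closedness by a different mechanism than the paper. The paper's proof takes a Cauchy sequence in $\mathcal A_c^2(\Omega)$, lets $f$ be its limit in $\mathcal A^2(\Omega)$, and then invokes Lemma 1.4.1 of \cite{K} (norm convergence in the Bergman space implies uniform convergence on compact subsets) to pass the pointwise identity $Z_{\mathbb C}\circ f_n\circ Z_{\mathbb C}=f_n$ to the limit; the case of $\mathcal A_{\bar c}^2(\Omega)$ is then reduced to that of $\mathcal A_c^2(\Omega)$ via $f\mapsto if$, exactly as you also note. You instead observe that $T(f)=Z_{\mathbb C}\circ f\circ Z_{\mathbb C}$ is a real-linear isometry of $\mathcal A^2(\Omega)$ onto itself --- a fact the paper has already recorded just before Remark \ref{ob1} --- so that $\mathcal A_c^2(\Omega)=\ker(I-T)$ and $\mathcal A_{\bar c}^2(\Omega)=\ker(I+T)$ are closed as kernels of bounded operators. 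Your route avoids any appeal to the boundedness of point evaluations or to uniform convergence on compacts, using only the measure-theoretic invariance under conjugation, and it treats both subspaces symmetrically in one stroke; the paper's route is more hands-on but has the minor virtue of reusing a lemma (convergence in norm implies locally uniform convergence) that is needed elsewhere in the Bergman-space machinery. Your closing remarks --- that the inherited inner product is genuinely real-valued on these subspaces by Proposition \ref{pro3}, and that $T$ is only conjugate-linear over $\mathbb C$ so the statement lives in the category of real Hilbert spaces --- are accurate and slightly more explicit than what the paper records.
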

  \begin{proof}
   We prove that the space $ \mathcal A_{c}^2(\Omega)$ is complete, since the completeness of $ \mathcal A_{\bar c}^2(\Omega)$ follows from the map $f\mapsto if$ and  the result for  $ \mathcal A_{c}^2(\Omega)$.
   \\
   Let $\{f_n\}_{n\in\mathbb N}\subset \mathcal A_c^2(\Omega)$ be a Cauchy sequence, then there exists $f \in \mathcal A^2(\Omega)$  such that $\{f_n\}_{n\in\mathbb N}$ converges to $f$ in norm.   Lemma 1.4.1 of \cite{K}, implies that the convergence in the norm of $\mathcal A^2(\Omega)$ implies the    uniformly convergence on compact subsets of $\Omega$. Therefore
  $$Z_{\mathbb C}\circ f\circ Z_{\mathbb C} (z)=\lim_{n\to \infty} Z_{\mathbb C}\circ f_n\circ Z_{\mathbb C} (z)=\lim_{n\to \infty} f_n (z)=f (z), \quad \forall z\in \Omega.$$
  Then $f\in  \mathcal A_{c}^2(\Omega)$.
\end{proof}

\begin{nton} Let $\Omega\subseteq\mathbb{C}$ be a Z-invariant open set.
Let  $\mathcal K_{\Omega}(\cdot ,\cdot )$ be the Bergman kernel associated to
 $\Omega$, and let $\mathcal R_{\Omega}(\cdot ,\cdot )$ and $\mathcal I_{\Omega}(\cdot ,\cdot )$ be  the functions defined on $\Omega\times \Omega$ such that for any $z\in \Omega$, one has   $\mathcal R_{\Omega}(z, \cdot),\mathcal I_{\Omega}(z,\cdot)\in \mathcal A_c^2(\Omega)$ and
  $$\mathcal K_{\Omega}(z, \cdot)=\mathcal R_{\Omega}(z, \cdot)+ i \mathcal I_{\Omega}(z,\cdot).$$
\end{nton}

The two functions $\mathcal R_{\Omega}$, $\mathcal I_{\Omega}$ allow to obtain the real and imaginary part
of a function $f\in \mathcal A_c(\Omega)$ as described in the following result:

\begin{teo}\label{pro13}
Let $\Omega$ be a $Z$-invariant open set in $\mathbb{C}$, and $f\in \mathcal{A}_c(\Omega)$, then
$$
\displaystyle Re  \ f(z)=\int_{\Omega}\mathcal R_{\Omega}(z, \zeta)f(\zeta)d\sigma_{\zeta} ,
$$
and
$$
Im  \ f(z)=\int_{\Omega} \mathcal I_{\Omega}(z,\zeta)f(\zeta)d\sigma_{\zeta},
$$
for all $z\in \Omega$.
\end{teo}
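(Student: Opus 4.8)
The plan is to read both identities off the reproducing property of the classical Bergman kernel, namely $f(z)=\int_{\Omega}\mathcal K_{\Omega}(z,\zeta)f(\zeta)\,d\sigma_{\zeta}$ for every $f\in\mathcal A^2(\Omega)$ and $z\in\Omega$ (the identity already used above in the case of the unit disk), and then to separate real and imaginary parts by means of the $C$-property. First I would insert the decomposition $\mathcal K_{\Omega}(z,\cdot)=\mathcal R_{\Omega}(z,\cdot)+i\,\mathcal I_{\Omega}(z,\cdot)$ into the reproducing formula, getting
$$f(z)=\int_{\Omega}\mathcal R_{\Omega}(z,\zeta)f(\zeta)\,d\sigma_{\zeta}+i\int_{\Omega}\mathcal I_{\Omega}(z,\zeta)f(\zeta)\,d\sigma_{\zeta}.$$
Since $\mathcal R_{\Omega}(z,\cdot),\mathcal I_{\Omega}(z,\cdot)\in\mathcal A_c^2(\Omega)\subset L^2(\Omega)$ and $f\in\mathcal A_c^2(\Omega)\subset L^2(\Omega)$, the Cauchy--Schwarz inequality guarantees that the two products are in $L^1(\Omega)$; I would record this at the outset, because it is what legitimately allows splitting the integral of $\mathcal K_{\Omega}(z,\cdot)f$ into the two integrals above.

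The decisive point, and the one I expect to be the main obstacle, is to show that each of the two integrals on the right-hand side is a \emph{real} number; granting that, the uniqueness of the decomposition of a complex number into its real and imaginary parts immediately forces $Re\,f(z)$ to equal the first integral and $Im\,f(z)$ the second, which is exactly the claim. To prove reality, fix $z$ and put $g:=\mathcal R_{\Omega}(z,\cdot)$. Then $g$ and $f$ both lie in $\mathcal A_c^2(\Omega)$, hence both satisfy the $C$-property, and therefore the product $gf$ is holomorphic, lies in $L^1(\Omega)$, and satisfies
$$(gf)(\bar\zeta)=g(\bar\zeta)f(\bar\zeta)=\overline{g(\zeta)}\;\overline{f(\zeta)}=\overline{(gf)(\zeta)},$$
so $gf$ again has the $C$-property. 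Now I would apply the change of variables $\zeta\mapsto\bar\zeta$, which maps $\Omega$ onto itself because $\Omega$ is $Z$-invariant and which preserves the area element $d\sigma$ (this is precisely the computation already carried out in Section~2 to show $Z_{\mathbb C}\circ f\circ Z_{\mathbb C}\in\mathcal A^2(\Omega)$); it yields
$$\int_{\Omega}(gf)(\zeta)\,d\sigma_{\zeta}=\int_{\Omega}(gf)(\bar\zeta)\,d\sigma_{\zeta}=\int_{\Omega}\overline{(gf)(\zeta)}\,d\sigma_{\zeta}=\overline{\int_{\Omega}(gf)(\zeta)\,d\sigma_{\zeta}},$$
so the integral coincides with its own conjugate and is therefore real. (Equivalently, this is just the extension of Proposition~\ref{pro3}(1) from a single $\mathcal A_c^2$-function to the $L^1$ product of two such functions, proved in the same way.) The identical argument with $g:=\mathcal I_{\Omega}(z,\cdot)$ disposes of the second integral.

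Finally I would assemble the pieces. Writing $A(z):=\int_{\Omega}\mathcal R_{\Omega}(z,\zeta)f(\zeta)\,d\sigma_{\zeta}$ and $B(z):=\int_{\Omega}\mathcal I_{\Omega}(z,\zeta)f(\zeta)\,d\sigma_{\zeta}$, the previous step gives $A(z),B(z)\in\mathbb R$, while the reproducing formula reads $f(z)=A(z)+i\,B(z)$; comparing real and imaginary parts yields $Re\,f(z)=A(z)$ and $Im\,f(z)=B(z)$ for every $z\in\Omega$, which is the assertion. Apart from the reality argument above, the only thing needing a little care is the absolute convergence justifying the manipulations with the integrals, which is exactly why the Cauchy--Schwarz estimate is invoked first.
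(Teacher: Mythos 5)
Your proof is correct, but it runs in the opposite direction from the paper's. You take $\mathcal R_\Omega(z,\cdot)$ and $\mathcal I_\Omega(z,\cdot)$ as already given by the decomposition $\mathcal K_\Omega(z,\cdot)=\mathcal R_\Omega(z,\cdot)+i\,\mathcal I_\Omega(z,\cdot)$ of Corollary \ref{pro0}, plug this into the known reproducing formula, and reduce everything to showing that $\int_\Omega gf\,d\sigma$ is real whenever $g,f\in\mathcal A_c^2(\Omega)$ --- which you do correctly via the C-property of the product $gf$ and the measure-preserving, $\Omega$-preserving substitution $\zeta\mapsto\bar\zeta$; comparing real and imaginary parts then finishes the argument. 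The paper instead constructs the kernels from scratch: it applies the Riesz representation theorem to the bounded real-linear functionals $f\mapsto Re\,f(z)$ and $f\mapsto Im\,f(z)$ on the real Hilbert space $\mathcal A_c^2(\Omega)$, obtains representers $R_z,I_z$, shows that $\dot R+i\dot I$ is a hermitian reproducing kernel for all of $\mathcal A^2(\Omega)$, and only then identifies $\dot R=\mathcal R_\Omega$, $\dot I=\mathcal I_\Omega$ by uniqueness of the Bergman kernel together with the uniqueness of the decomposition. Your route is shorter and more elementary: it needs neither the completeness of $\mathcal A_c^2(\Omega)$ (Proposition \ref{pro2}) nor the Riesz theorem nor the uniqueness argument, only the standard reproducing property of $\mathcal K_\Omega$ and the Cauchy--Schwarz integrability estimate you rightly record at the start. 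What the paper's longer route buys is the additional structural information that $\mathcal R_\Omega(z,\cdot)$ and $\mathcal I_\Omega(z,\cdot)$ arise precisely as (conjugates of) the Riesz representers of the real and imaginary evaluation functionals on $\mathcal A_c^2(\Omega)$, a fact exploited in the subsequent propositions; your argument proves the stated identities but does not by itself exhibit that interpretation.
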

\begin{proof}
Let us denote by $\phi_1,\phi_2$ the two projections from $\mathbb C$ to $\mathbb R$ given by
$$\phi_1(z)=Re \; z ,\ \ \textrm{and} \ \ \phi_2(z)=Im \; z,\quad \forall z\in \mathbb C.$$
Since the evaluation functional  $\phi_z$ is bounded on $\mathcal A^2(\Omega)$, also  the  real-linear functionals:  $\alpha_z:=\phi_1 \circ \phi_z$,  and $\beta_z:=\phi_2 \circ \phi_z$  are  bounded on $\mathcal A_c^2(\Omega)$. Note that
$$
\alpha_z[f] = Re \ f(z) , \quad \textrm{and}    \quad  \beta_z[f](z)= Im\ f(z),\quad \forall f\in \mathcal A_c^2(\Omega).
$$

The Riesz's representation theorem for real-linear  Hilbert spaces implies the existence of
  $R_z, \ I_z \in \mathcal A_c^2(\Omega)$, such that
  \begin{equation}\label{Ref}
  Re  \, f(z)=\int_{\Omega}\overline{ R_z(\zeta)}f(\zeta)d\sigma_{\zeta},\quad \quad \forall f\in \mathcal A_c^2(\Omega)
  \end{equation}
  and
   \begin{equation}\label{Imf}
   Im  \, f(z)=\int_{\Omega}\overline{ I_z(\zeta)}f(\zeta)d\sigma_{\zeta}, \quad \quad \forall f\in \mathcal A_c^2(\Omega).
   \end{equation}

  Denoting $\dot{ R}(z,\cdot)=\overline{R_z }$ and $\dot{I}(z,\cdot)=\overline{I_z }$
   we can rewrite (\ref{Ref}) and (\ref{Imf}) as
$$
 Re  \, f(z)=\int_{\Omega} \dot{ R}(z,\zeta)f(\zeta)d\sigma_{\zeta},\quad \textrm{and}\quad
Im  \, f(z)=\int_{\Omega} \dot{ I}(z,\zeta)f(\zeta)d\sigma_{\zeta},\quad \forall f\in \mathcal A_c^2(\Omega).
$$

Now, defining   ${\bf{K}}_{\Omega}(z,\cdot):=\dot{ R}(z,\cdot) + i \dot{ I}(z,\cdot)$ one has
$$\displaystyle f(z)= \int_{\Omega}{\bf{K}}_{\Omega}(z,\zeta)f(\zeta) d\sigma_{\zeta} ,\quad \forall f\in \mathcal A^2(\Omega).$$
Since $R_w$, $I_w$ belong to $\mathcal{A}^2_c(\Omega)$ we have that $\overline{{\bf{K}}_{\Omega} (w, \cdot)}=  R_w -i   I_w \in \mathcal A^2(\Omega)$, for any $w\in\Omega$ and
$$
\overline{{\bf{K}}_{\Omega} (w,z) } = \int_{\Omega}{\bf{K}}_{\Omega} (z,\zeta) \overline{{\bf{K}}_{\Omega} (w,\zeta)} d\sigma_{\zeta},
$$
Applying the complex conjugation on the previous expression one gets
$${\bf{K}}_{\Omega} (w,z)  = \int_{\Omega}{\bf{K}}_{\Omega} (w,\zeta) \overline{{\bf{K}}_{\Omega} (z,\zeta)} d\sigma_{\zeta}=\overline{{\bf{K}}_{\Omega} (z,w) }, $$
which  means  that the function ${\bf{K}}_{\Omega} $ is hermitian.
The uniqueness of the Bergman kernel implies that ${\bf{K}}_{\Omega} =\mathcal K_{\Omega}$. Therefore
$\mathcal R_{\Omega}=\dot{ R}$ and $\mathcal I_{\Omega} =\dot{I}$ by Corollary \ref{pro0}.
\end{proof}

\begin{ob}{\rm
Note that $\alpha_{z}[f]=\alpha_{\bar z}[f],$ and  $\beta_{z}[f]=- \beta_{\bar z}[f]$,        {for all} $ f\in \mathcal A_c^2(\Omega)$.
}
\end{ob}
From now on, for the sake of simplicity, we will omit the subscript "$\Omega$" when referring to $\mathcal{R}$, $\mathcal{I}$.
\begin{pro}\label{cor1}(Some properties of $ \mathcal R$ and $ \mathcal I$)
Let $\Omega$ be a $Z$-invariant domain in $\mathbb{C}$.
\begin{enumerate}
  \item  $\mathcal R(z,w)=\overline{\mathcal R( z,\bar{w})}$ {} and {} $\mathcal I(z,w)=\overline{\mathcal I( z,\bar{w})}$, for all $w\in \Omega$.
        \item $\mathcal R(z,\bar z) - i\mathcal I(z,z) =\mathcal R(z,z) +i \mathcal I(z,\bar z)$.
         \item $\displaystyle \mathcal R(z,\bar z) -i \mathcal I(z,z)= \int_{\Omega}|\mathcal R(z,\zeta)|^2- |\mathcal I(z,\zeta)|^2 d\sigma_{\zeta}$.
\end{enumerate}

\end{pro}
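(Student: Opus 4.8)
The plan is to establish the three identities in the stated order, each leaning on the previous ones. The first identity is just the C-property written out: since $\mathcal R(z,\cdot),\mathcal I(z,\cdot)\in\mathcal A_c^2(\Omega)\subseteq Hol_c(\Omega)$, the relation $Z_{\mathbb C}\circ g\circ Z_{\mathbb C}=g$, evaluated at a point $w$, reads $\overline{g(\bar w)}=g(w)$, that is $g(\bar w)=\overline{g(w)}$ (note $\bar w\in\Omega$ by $Z$-invariance). Taking $g=\mathcal R(z,\cdot)$ gives $\mathcal R(z,\bar w)=\overline{\mathcal R(z,w)}$, and conjugating both sides yields $\mathcal R(z,w)=\overline{\mathcal R(z,\bar w)}$; the same computation with $g=\mathcal I(z,\cdot)$ gives the companion statement for $\mathcal I$.

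For the second identity I would invoke the hermitian symmetry of the Bergman kernel that is proved inside the proof of Theorem~\ref{pro13}, namely $\mathcal K_\Omega(w,z)=\overline{\mathcal K_\Omega(z,w)}$. Substituting the decomposition $\mathcal K_\Omega=\mathcal R+i\mathcal I$ on both sides and using the first identity to replace $\overline{\mathcal R(z,w)}$ by $\mathcal R(z,\bar w)$ and $\overline{\mathcal I(z,w)}$ by $\mathcal I(z,\bar w)$, one obtains $\mathcal R(w,z)+i\mathcal I(w,z)=\mathcal R(z,\bar w)-i\mathcal I(z,\bar w)$. Specializing to $w=z$ and moving the imaginary terms across gives exactly $\mathcal R(z,\bar z)-i\mathcal I(z,z)=\mathcal R(z,z)+i\mathcal I(z,\bar z)$.

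The third identity is the one that requires an actual idea: one must feed the correct functions into the two reproducing formulas of Theorem~\ref{pro13}. Recall from its proof that $\mathcal R(z,\cdot)=\overline{R_z}$ and $\mathcal I(z,\cdot)=\overline{I_z}$ with $R_z,I_z\in\mathcal A_c^2(\Omega)$, so that $R_z(\zeta)=\overline{\mathcal R(z,\zeta)}$ and $I_z(\zeta)=\overline{\mathcal I(z,\zeta)}$. Inserting $f=R_z$ into $Re\,f(z)=\int_\Omega\mathcal R(z,\zeta)f(\zeta)\,d\sigma_\zeta$ turns the integrand into $|\mathcal R(z,\zeta)|^2$, whence $\int_\Omega|\mathcal R(z,\zeta)|^2\,d\sigma_\zeta=Re\,R_z(z)=Re\,\mathcal R(z,z)=\tfrac12(\mathcal R(z,z)+\mathcal R(z,\bar z))$, the last equality by the first identity. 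Likewise, inserting $f=I_z$ into $Im\,f(z)=\int_\Omega\mathcal I(z,\zeta)f(\zeta)\,d\sigma_\zeta$ yields $\int_\Omega|\mathcal I(z,\zeta)|^2\,d\sigma_\zeta=Im\,I_z(z)=-Im\,\mathcal I(z,z)=\tfrac{i}{2}(\mathcal I(z,z)-\mathcal I(z,\bar z))$, again by the first identity. Subtracting the two, the right-hand side of the claimed formula becomes $\tfrac12(\mathcal R(z,\bar z)-i\mathcal I(z,z))+\tfrac12(\mathcal R(z,z)+i\mathcal I(z,\bar z))$, and the second identity tells us the two halves are equal, collapsing the sum to $\mathcal R(z,\bar z)-i\mathcal I(z,z)$.

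I expect the only real friction to be bookkeeping: keeping track of which slot carries the bar when moving between $\mathcal R(z,w)$, $\mathcal R(z,\bar w)$ and $\overline{\mathcal R(z,w)}$ (and the analogues for $\mathcal I$), and remembering to insert the conjugated kernels $R_z,I_z$ rather than $\mathcal R(z,\cdot),\mathcal I(z,\cdot)$ themselves into the reproducing formulas, since it is exactly that conjugation which replaces $\mathcal R(z,\zeta)^2$ by the modulus square $|\mathcal R(z,\zeta)|^2$. Everything else is elementary algebra with real and imaginary parts of complex numbers.
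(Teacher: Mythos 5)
Your proposal is correct and takes essentially the same route as the paper, whose proof merely labels the three items as direct calculations: you derive item 1 from the C-property exactly as the paper does, item 2 from the hermitian symmetry of $\mathcal K_\Omega$ established inside the proof of Theorem \ref{pro13} (the paper instead invokes the equivalent identities $Re\,\bar f(z)=Re\,f(\bar z)$ and $Im\,\bar f(z)=-Im\,f(z)$ for $f\in\mathcal A_c^2(\Omega)$, which lead to the same conclusion), and item 3 by inserting the representers $R_z=\overline{\mathcal R(z,\cdot)}$ and $I_z=\overline{\mathcal I(z,\cdot)}$ into the reproducing formulas and then combining with items 1 and 2. All the steps check out.
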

\begin{proof}

\begin{enumerate}
\item  We have $\mathcal R(z,w)=\overline{ R_z(w)}=R_z(\bar w)=\overline{\mathcal R( z,\bar{w})}$. The proof for the function $\mathcal I$ is similar.

 \item The property follows from  the fact that all the functions  $f\in \mathcal A_c^2(\Omega)$ satisfy
  $Re  \ \bar f(z)= Re  \ f(\bar z)$ and  $ -Im \  f(z)=Im \bar  f(z)=Im f(\bar  z)$.

\item It is a  direct consequence of the previous facts.
\end{enumerate}
\end{proof}

The version of  Theorem \ref{pro13} which holds for functions in $ \mathcal A_{\bar c}^2(\Omega)$ is given in the next result.
\begin{pro}\label{pro14}
Let $\Omega\subseteq\mathbb C$ be a Z-invariant open set.
Given   $f\in \mathcal A_{\bar c}^2(\Omega)$,  one has
  $$
  i  Im \, f(z)=\int_{\Omega} \mathcal R(z,\zeta)f(\zeta)d\sigma_{\zeta},
  $$
  and
  $$
   -i Re \, f(z) = \int_{\Omega} \mathcal I(z,\zeta) f(\zeta) d\sigma_{\zeta}.
   $$
\end{pro}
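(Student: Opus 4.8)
The plan is to reduce the statement to Theorem \ref{pro13} by exploiting the isomorphism $g\mapsto ig$ between the real-linear spaces $\mathcal A_{c}^2(\Omega)$ and $\mathcal A_{\bar c}^2(\Omega)$ recorded in Remark \ref{ob1}. Given $f\in\mathcal A_{\bar c}^2(\Omega)$, I would first introduce $g:=-if$ and check that $g\in\mathcal A_{c}^2(\Omega)$. Since $Z_{\mathbb C}$ is the complex conjugation, for every $z\in\Omega$ one has
$$(Z_{\mathbb C}\circ g\circ Z_{\mathbb C})(z)=\overline{-i\,f(\bar z)}=i\,\overline{f(\bar z)}=i\bigl(-f(z)\bigr)=g(z),$$
where the third equality uses that $f$ satisfies the anti-C-property. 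This is exactly the map of Remark \ref{ob1}, item 3, and it is the only place where the $Z$-invariance of $\Omega$ is needed.

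Next I would apply Theorem \ref{pro13} to $g\in\mathcal A_{c}^2(\Omega)$, which yields
$$Re\, g(z)=\int_{\Omega}\mathcal R(z,\zeta)g(\zeta)\,d\sigma_{\zeta},\qquad Im\, g(z)=\int_{\Omega}\mathcal I(z,\zeta)g(\zeta)\,d\sigma_{\zeta},$$
for all $z\in\Omega$. Substituting $g=-if$ and pulling the scalar $-i$ out of the integrals on the right, while on the left noting that $g(z)=-if(z)=Im\, f(z)-i\,Re\, f(z)$, so that $Re\, g(z)=Im\, f(z)$ and $Im\, g(z)=-Re\, f(z)$, I obtain
$$Im\, f(z)=-i\int_{\Omega}\mathcal R(z,\zeta)f(\zeta)\,d\sigma_{\zeta},\qquad -Re\, f(z)=-i\int_{\Omega}\mathcal I(z,\zeta)f(\zeta)\,d\sigma_{\zeta}.$$
Multiplying each of these two identities by $i$ gives precisely $i\,Im\, f(z)=\int_{\Omega}\mathcal R(z,\zeta)f(\zeta)\,d\sigma_{\zeta}$ and $-i\,Re\, f(z)=\int_{\Omega}\mathcal I(z,\zeta)f(\zeta)\,d\sigma_{\zeta}$, which is the assertion.

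There is essentially no serious obstacle here: the only points that require a line of justification are the membership $-if\in\mathcal A_{c}^2(\Omega)$ (a special case of Remark \ref{ob1}) and the elementary bookkeeping of real and imaginary parts under multiplication by $-i$. An alternative route would be to apply Corollary \ref{pro0} to write $f=f_1+if_2$ with $f_1,f_2\in\mathcal A_{c}^2(\Omega)$, observe that $f\in\mathcal A_{\bar c}^2(\Omega)$ forces $f_1=0$, and then invoke Theorem \ref{pro13} on $f_2$; but the one-line substitution $g=-if$ above is the most economical way to conclude.
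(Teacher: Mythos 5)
Your proof is correct and follows essentially the same route as the paper, which also reduces to Theorem \ref{pro13} by multiplying $f$ by a unit imaginary scalar to land in $\mathcal A_{c}^2(\Omega)$ (the paper uses $if$ where you use $-if$, an immaterial difference). The bookkeeping of real and imaginary parts and the final identities all check out.
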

\begin{proof}
Note that   $i f \in \mathcal A_c^2(\Omega)$,  then
$$Re \, if (z)=\int_{\Omega} \mathcal R(\zeta, z) i f(\zeta) d\sigma_{\zeta},$$
or equivalently
 $$i Im \, f (z)= \int_{\Omega} \mathcal R(\zeta, z)  f(\zeta) d\sigma_{\zeta}.$$
For the function $\mathcal I$ the proof is similar.
\end{proof}

\begin{cor}\label{cor15}
Let $\Omega\subseteq\mathbb C$ be a Z-invariant open set.
  Given any $f\in \mathcal A^2(\Omega)$, let $f_1\in \mathcal A_c^2(\Omega)$ and $f_2\in $ $\mathcal A_{\bar c}^2(\Omega)$ be such that
  $f=f_1+ f_2$. Then
  $$\int_{\Omega} \mathcal R(\zeta, z) f(\zeta )d\sigma_{\zeta} = Re \ f_1(z) + i Im \ f_2(z),$$
  and
  $$  \int_{\Omega} \mathcal I(\zeta, z) f(\zeta )d\sigma_{\zeta} = Im \ f_1(z) - i Re \ f_2(z).$$
\end{cor}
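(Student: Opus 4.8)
The plan is to reduce the statement to the two reproducing formulas already established: Theorem~\ref{pro13}, valid for functions in $\mathcal A_c^2(\Omega)$, and Proposition~\ref{pro14}, valid for functions in $\mathcal A_{\bar c}^2(\Omega)$. By Proposition~\ref{ob3} and its corollary (in the Bergman version), every $f\in\mathcal A^2(\Omega)$ splits uniquely as $f=f_1+f_2$ with $f_1\in\mathcal A_c^2(\Omega)$ and $f_2\in\mathcal A_{\bar c}^2(\Omega)$, explicitly $f_1=\tfrac12(f+Z_{\mathbb C}\circ f\circ Z_{\mathbb C})$ and $f_2=\tfrac12(f-Z_{\mathbb C}\circ f\circ Z_{\mathbb C})$, so it suffices to evaluate each of the two integrals on $f_1$ and on $f_2$ separately and add.

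First I would check that both integrals in the statement are finite for every $f\in\mathcal A^2(\Omega)$: since $\mathcal R(z,\cdot),\mathcal I(z,\cdot)\in\mathcal A_c^2(\Omega)\subset\mathcal A^2(\Omega)$, Proposition~\ref{cor1}(1) together with the hermitian symmetry of $\mathcal K_\Omega$ recorded in the proof of Theorem~\ref{pro13} shows that $\mathcal R(\cdot,z),\mathcal I(\cdot,z)$ lie in $\mathcal A^2(\Omega)$ as well, so Cauchy--Schwarz in $\mathcal A^2(\Omega)$ applies and the maps $f\mapsto\int_\Omega\mathcal R(\zeta,z)f(\zeta)\,d\sigma_\zeta$ and $f\mapsto\int_\Omega\mathcal I(\zeta,z)f(\zeta)\,d\sigma_\zeta$ are well defined and real-linear; in particular they are additive along the decomposition $f=f_1+f_2$.

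Then, for the first identity, Theorem~\ref{pro13} applied to $f_1$ gives $\int_\Omega\mathcal R(\zeta,z)f_1(\zeta)\,d\sigma_\zeta=Re\,f_1(z)$, while Proposition~\ref{pro14} applied to $f_2$ gives $\int_\Omega\mathcal R(\zeta,z)f_2(\zeta)\,d\sigma_\zeta=i\,Im\,f_2(z)$; summing yields the asserted formula for $\int_\Omega\mathcal R(\zeta,z)f(\zeta)\,d\sigma_\zeta$. For the second identity I would argue identically, using the $\mathcal I$-part of Theorem~\ref{pro13} on $f_1$, which contributes $Im\,f_1(z)$, and the $\mathcal I$-part of Proposition~\ref{pro14} on $f_2$, which contributes $-i\,Re\,f_2(z)$, and adding.

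There is no deep obstacle: the statement is a bookkeeping consequence of the two earlier reproducing formulas and the orthogonal splitting $\mathcal A^2(\Omega)=\mathcal A_c^2(\Omega)\oplus\mathcal A_{\bar c}^2(\Omega)$. The only point that deserves care is keeping the order of the two arguments of $\mathcal R$ and $\mathcal I$ consistent throughout, since the statement is phrased with $\mathcal R(\zeta,z),\mathcal I(\zeta,z)$ whereas Theorem~\ref{pro13} is phrased with the slots in the opposite order; I would settle this once at the outset using Proposition~\ref{cor1}(1) and the hermitian property of $\mathcal K_\Omega$, after which the remaining steps are purely linear.
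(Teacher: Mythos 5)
Your proposal is correct and follows exactly the paper's route: the paper's proof is the one-line remark that the corollary is a direct consequence of Theorem~\ref{pro13} and Proposition~\ref{pro14}, and you simply fill in the same steps (split $f=f_1+f_2$ via Proposition~\ref{ob3}, apply Theorem~\ref{pro13} to $f_1$ and Proposition~\ref{pro14} to $f_2$, and add by real-linearity of the integrals). Your aside about reconciling the order of the arguments in $\mathcal R(\zeta,z)$ versus $\mathcal R(z,\zeta)$ correctly flags a notational inconsistency that the paper itself carries (compare the statement and proof of Proposition~\ref{pro14}), so it does not affect the substance.
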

\begin{proof}
It is direct consequence of  Theorem \ref{pro13}, and  Proposition \ref{pro14}.
\end{proof}

\begin{ob}
{\rm Let us denote by ${\bf B}_\Omega:\ \mathcal{L}^2(\Omega)\to \mathcal{A}^2(\Omega)$ the Bergman projection associated to $\Omega$:
 $$
 {\bf B}_\Omega\ : f\ \mapsto \int_\Omega \mathcal{K}_\Omega (\cdot, \zeta)f(\zeta)d\sigma_{\zeta}.
 $$
 The previous results allow to see that the Bergman projection is the sum of two operators:
$$
{\bf R}_{\Omega}[f](z)=  \int_{\Omega} \mathcal R(\zeta, z) f(\zeta )d\sigma_{\zeta} ,\quad {\bf I}_{\Omega}[f](z)=  \int_{\Omega} \mathcal I(\zeta, z) f(\zeta )d\sigma_{\zeta},\quad \forall f\in \mathcal A^2(\Omega).
$$
In other words,   $\mathbf{B}_{\Omega} = \mathbf{R}_{\Omega} + i \mathbf{I}_{\Omega}$, and the behavior of these new operators is given in Theorem \ref{pro13}, Proposition \ref{pro14} and Corollary \ref{cor15}.
}
\end{ob}

\begin{pro} \ {}
Let $\Omega\subseteq\mathbb C$ be a Z-invariant open set. Then the functions $\mathcal R$, $\mathcal I$ satisfy:
  \begin{enumerate}
    \item $\mathcal I(\bar z, \zeta)=- \mathcal I(z,\zeta)$ {} and {} $\mathcal R(\bar z, \zeta)=\mathcal R(z,\zeta)$.

    \item  $\mathcal I(\bar z, \bar \zeta)= - \overline{\mathcal I(z,\zeta)}$ {} and {} $\mathcal R(\bar z, \bar   \zeta)=\overline{\mathcal R(z,\zeta)}$.
      \end{enumerate}
\end{pro}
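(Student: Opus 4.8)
The plan is to read off both identities from the Riesz--representation description of $\mathcal R(z,\cdot)$ and $\mathcal I(z,\cdot)$ that was used in the proof of Theorem~\ref{pro13}, together with Proposition~\ref{cor1}.

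For item~(1), recall that in the proof of Theorem~\ref{pro13} one has $\mathcal R(z,\cdot)=\overline{R_z}$ and $\mathcal I(z,\cdot)=\overline{I_z}$, where $R_z,I_z\in\mathcal A_c^2(\Omega)$ are the unique Riesz representatives, on the real Hilbert space $\mathcal A_c^2(\Omega)$, of the bounded real-linear functionals $\alpha_z[f]=Re\,f(z)$ and $\beta_z[f]=Im\,f(z)$. By the Remark following Theorem~\ref{pro13}, $\alpha_z[f]=\alpha_{\bar z}[f]$ and $\beta_z[f]=-\beta_{\bar z}[f]$ for every $f\in\mathcal A_c^2(\Omega)$; that is, $\alpha_{\bar z}=\alpha_z$ and $\beta_{\bar z}=-\beta_z$ as functionals. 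Since the Riesz representative is unique, and the representative of $-\Phi$ is minus that of $\Phi$, this forces $R_{\bar z}=R_z$ and $I_{\bar z}=-I_z$. Conjugating the identities $\mathcal R(z,\cdot)=\overline{R_z}$ and $\mathcal I(z,\cdot)=\overline{I_z}$ then yields $\mathcal R(\bar z,\zeta)=\overline{R_{\bar z}(\zeta)}=\overline{R_z(\zeta)}=\mathcal R(z,\zeta)$ and $\mathcal I(\bar z,\zeta)=\overline{I_{\bar z}(\zeta)}=-\overline{I_z(\zeta)}=-\mathcal I(z,\zeta)$, which is~(1).

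For item~(2), I would combine~(1) with Proposition~\ref{cor1}(1). Replacing $w$ by $\bar w$ in $\mathcal R(z,w)=\overline{\mathcal R(z,\bar w)}$ gives $\mathcal R(z,\bar\zeta)=\overline{\mathcal R(z,\zeta)}$, and likewise $\mathcal I(z,\bar\zeta)=\overline{\mathcal I(z,\zeta)}$; applying~(1) in the first variable then produces $\mathcal R(\bar z,\bar\zeta)=\mathcal R(z,\bar\zeta)=\overline{\mathcal R(z,\zeta)}$ and $\mathcal I(\bar z,\bar\zeta)=-\mathcal I(z,\bar\zeta)=-\overline{\mathcal I(z,\zeta)}$. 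Equivalently, one can argue directly: $\mathcal R(\bar z,\bar\zeta)=\overline{R_{\bar z}(\bar\zeta)}=\overline{R_z(\bar\zeta)}$, and since $R_z\in\mathcal A_c^2(\Omega)\subset Hol_c(\Omega)$ has the C-property, $R_z(\bar\zeta)=\overline{R_z(\zeta)}$, so $\mathcal R(\bar z,\bar\zeta)=R_z(\zeta)=\overline{\mathcal R(z,\zeta)}$; the same computation with $I_z$ and $I_{\bar z}=-I_z$ gives the statement for $\mathcal I$.

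I do not expect a genuine obstacle here. The one point worth flagging is that one cannot shortcut~(1) by noting, via Theorem~\ref{pro13} and the Remark, that $\int_\Omega\bigl(\mathcal R(\bar z,\zeta)-\mathcal R(z,\zeta)\bigr)f(\zeta)\,d\sigma_\zeta=Re\,f(\bar z)-Re\,f(z)=0$ for all $f\in\mathcal A_c^2(\Omega)$ and then ``cancelling $f$'': the bilinear pairing $(f,g)\mapsto\int_\Omega fg\,d\sigma$ is degenerate on $\mathcal A^2(\Omega)$ (for instance $\int_{\mathbb B^2}\zeta\, f(\zeta)\,d\sigma_\zeta=0$ for every $f\in\mathcal A^2(\mathbb B^2)$), so the argument must pass through the sesquilinear structure, i.e.\ through uniqueness of the Riesz representative as above.
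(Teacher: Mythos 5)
Your proof is correct and follows essentially the same route as the paper: item (1) is the paper's argument that $\int_\Omega\bigl(\mathcal I(z,\zeta)+\mathcal I(\bar z,\zeta)\bigr)f(\zeta)\,d\sigma_\zeta=0$ for all $f\in\mathcal A_c^2(\Omega)$ forces the sum to vanish, which is exactly uniqueness of the Riesz representative as you phrase it, and item (2) is obtained, as in the paper, by combining item (1) with Proposition~\ref{cor1}. Your closing caveat about the degeneracy of the bilinear pairing is well taken and corresponds to the paper's (tersely stated) observation that $\overline{\mathcal I(z,\cdot)+\mathcal I(\bar z,\cdot)}$ lies in $\mathcal A_c^2(\Omega)$, so that one may test against it and obtain a genuine norm.
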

\begin{proof}
\begin{enumerate}
  \item From the fact $-Im \ f(z)= Im \overline{f(z)}=Im \ f(\bar z)$, for all $f\in \mathcal A_c^2(\Omega)$, one  {obtains} that
$$-\int_{\Omega} \mathcal I(z, \zeta) f(\zeta) d\sigma_{\zeta}=\int_{\Omega} \mathcal I(\bar z, \zeta) f(\zeta) d\sigma_{\zeta}, \quad \forall f\in \mathcal A_c^2(\Omega).$$
Therefore
$$\int_{\Omega}\left( \mathcal I(z, \zeta)+  \mathcal I(\bar z, \zeta)\right) f(\zeta) d\sigma_{\zeta}=0, \quad \forall f\in \mathcal A_c^2(\Omega).$$
In particular, $\overline{ \mathcal I(z, \cdot )+  \mathcal I(\bar z, \cdot)}\in \mathcal A_c^2(\Omega)$, then $$\| \mathcal I(z, \cdot)+  \mathcal I(\bar z, \cdot)\|_{\mathcal A^2(\Omega)}=0.$$
The proof of the fact  $\mathcal R(\bar z, \zeta)=\mathcal R(z,\zeta)$ is similar to the previous reasoning.
\item Use  the previous case and Corollary \ref{cor1}.
\end{enumerate}
\end{proof}

\section{Slice regular functions and the C-property}

We consider the space  $\mathbb R^3$ embedded in  $\mathbb H$ as follows $$(a_1,a_2,a_3) \mapsto a_1e_1+a_2e_2+a_3e_3,$$
where $\{e_0=1,e_1,e_2,e_3\}$ is the usual basis of the quaternions.
    Let $\mathbb S^2$ be the sphere of purely imaginary unit quaternions and let $\bf i\in\mathbb S^2$. The real space generated by $\{1, {\bf i}\}$, denoted by $\mathbb C({\bf i})$,   is isomorphic, not only as a  {linear} space but even as a field,  to  the field of the complex numbers.   Given a domain $\Lambda\subset\mathbb H$, let  $\Lambda_{\bf i}=\Lambda\cap \mathbb C({\bf i})$  and   $Hol( \Lambda_{\bf i})$ represents the  complex linear space of holomorphic functions from   $\Lambda_{\bf i}$  to $\mathbb C({\bf i})$. \\
    Any nonreal quaternion $q=q_0+e_1q_1+e_2q_2+e_3q_3$ can be uniquely written in the form $q=x+I_qy$ where
     $x=q_0$,  $\displaystyle I_q=\frac{\vec q}{\|\vec q\|}\in\mathbb{S}^2$, and $y=\|\vec q\|$ thus it  belongs to the complex plane $\mathbb C(I_q)$.
\\
\begin{defin}
A real differentiable quaternionic-valued function $f$ defined on an open set $\Lambda \subset\mathbb{H}$ is called (left) slice regular on $\Lambda$ if, for any ${\bf i}\in \mathbb S^2$, the function $f_{\mid_{\Lambda_{{\bf i}}}}$
 is such that
$$
 \left( \frac{\partial}{\partial x} + {\bf i}  \frac{\partial}{\partial y}\right) f_{\mid_{\Lambda_{{\bf i}}}}(x+y{\bf i})=0, \textrm{ on $\Lambda_{{\bf i}}$}.
 $$
The function $f$ is called anti-slice regular on the right if, for any ${\bf i}\in \mathbb S^2$, the function $f_{\mid_{\Lambda_{{\bf i}}}}$  is such that
$$
 \frac{\partial  }{\partial x}f_{\mid_{\Lambda_{{\bf i}}}}(x+y{\bf i})- \frac{\partial}{\partial y}f_{\mid_{\Lambda_{{\bf i}}}}(x+y{\bf i}){\bf{i}}=0, \textrm{ on $\Lambda_{{\bf i}}$}.
$$
We denote by $\mathcal{SR}(\Lambda)$ the set of slice regular functions on $\Omega$.
\end{defin}
The theory of slice regular functions is meaningful only if the open sets on which they are defined have suitable properties, thus we introduce the following definition.
\begin{defin}
Let $\Lambda\subseteq\mathbb{H}$ we say that $\Lambda$ is axially symmetric if whenever $q=x+I_qy$ belongs to $\Lambda$ all the elements $x+Iy$ belong to $\Lambda$ for all $I\in\mathbb{S}^2$. We say that $\Lambda$ is a slice domain, or s-domain for short, if it is a domain intersecting the real axis and such that $\Lambda\cap \mathbb C({\bf i})$ is connected for all ${\bf i}\in\mathbb{S}^2$.
\end{defin}

In this section,  ${\bf i, \bf j} \in\mathbb S^2$ are mutually orthogonal vectors,  and   $\Lambda\subset \mathbb H$  will be an axially symmetric s-domain. Therefore for any $\bf i$, the domain $\Lambda_{\bf i}$ is $Z$-invariant in the complex plane $\mathbb C({\bf i})$.

The Splitting Lemma and the Representation Formula, see \cite{csTrends}, imply the good definition of the following operators, which  relate the slice regular space with the space of pairs of holomorphic functions on $\Lambda_{\bf i}$:
  $$
  \begin{array}{lrcl}
    Q_{\bf i} : &  \mathcal{SR}(\Lambda)  &\longrightarrow &  Hol(\Lambda_{\bf i})+ Hol(\Lambda_{\bf i}){\bf j}\\
 &  & & \\
Q_{\bf i} :  & f & \longmapsto & f\mid_{\Lambda_{\bf i}},\quad \quad  \end{array}
  $$
and
   $$ P_{\bf i} :   Hol(\Lambda_{\bf i})+ Hol(\Lambda_{\bf i}){\bf j} \longrightarrow  \mathcal{SR}(\Lambda),$$ defined for any $f\in  Hol(\Lambda_{\bf i})+ Hol(\Lambda_{\bf i}){\bf j}$  by
   $$
   P_{\bf i}[f](q)=P_{\bf i}[f](x+yI_q)=\frac{1}{2}\left[(1+ I_q{\bf i})f(x-y{\bf i}) + (1- I_q {\bf i}) f(x+y {\bf i})\right].
   $$
 Moreover, we have  that
$$P_{\bf i}\circ Q_{\bf i}= I_{\mathcal{SR}(\Lambda)} \quad \textrm{and}
 \quad  Q_{\bf i}\circ P_{\bf i}= I_{ Hol(\Lambda_{\bf i})+ Hol(\Lambda_{\bf i}){\bf j} } $$
where $I$ denotes the identity operator.

In the sequel, we will use the notations for holomorphic functions with the C- and the anti-C-property, presented  in the previous section.

\begin{pro}\label{pro1} \ {}
Let $\Lambda$ be an axially symmetric s-domain and let $\Lambda_{\bf i}=\Lambda \cap \mathbb{C}_{\bf i}$.
\begin{enumerate}
\item If   $f\in Hol_c(\Lambda_{\bf i}) $,  then
$$P_{\bf i}[f](q)=P_{\bf i}[f](x+y I_q)= Re \, f(x+y {\bf i}) +  I_q Im \, f(x+y {\bf i}),\quad \forall q\in\Lambda.$$
\item  If $f\in Hol_{\bar c}(\Lambda_{\bf i}) $, then
   $$
   P_{\bf i}[f](q)= \left( Im  \, f(x+y {\bf i}) -  I_q Re \, f(x+y {\bf i}) \right){\bf i},\quad \forall q\in\Lambda,
   $$
\end{enumerate}
where, for both cases, $x=q_0$,  $y=\|\vec q\|$ and the  real-valued functions  $Re \, f$ and $Im \, f$ are such that $f(x+{\bf i}y)= Re \, f(x+{\bf i}y) + {\bf i} Im \, f(x+{\bf i}y)$.
\end{pro}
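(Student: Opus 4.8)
The plan is to compute $P_{\bf i}[f](q)$ directly from its defining formula
$$P_{\bf i}[f](x+yI_q)=\frac{1}{2}\left[(1+ I_q{\bf i})f(x-y{\bf i}) + (1- I_q {\bf i}) f(x+y {\bf i})\right],$$
substituting the hypotheses on $f$. In both cases I will write $f(x+{\bf i}y)= u + {\bf i}v$, where $u = Re\, f(x+{\bf i}y)$ and $v = Im\, f(x+{\bf i}y)$ are real-valued, and then use the C- or anti-C-property to rewrite $f(x-y{\bf i})$ in terms of $u$ and $v$. The main observation is that $x-y{\bf i}$ is the conjugate of $x+y{\bf i}$ inside $\mathbb C({\bf i})$, so $Z_{\mathbb C}$ on $\mathbb C({\bf i})$ sends $x+y{\bf i}\mapsto x-y{\bf i}$.

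For part (1), $f\in Hol_c(\Lambda_{\bf i})$ means $f(x-y{\bf i}) = \overline{f(x+y{\bf i})} = u - {\bf i}v$. Substituting into the formula gives
$$P_{\bf i}[f](x+yI_q)=\frac{1}{2}\left[(1+ I_q{\bf i})(u-{\bf i}v) + (1- I_q {\bf i})(u+{\bf i}v)\right].$$
Expanding, the terms in $u$ give $\frac{1}{2}(2u) = u$, and the terms in ${\bf i}v$ cancel in the "$1$" part while the "$I_q{\bf i}$" part contributes $\frac{1}{2}\left[(I_q{\bf i})(-{\bf i}v) + (-I_q{\bf i})({\bf i}v)\right] = \frac{1}{2}\left[-I_q{\bf i}^2 v - I_q{\bf i}^2 v\right] = -I_q{\bf i}^2 v = I_q v$, using ${\bf i}^2=-1$. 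Hence $P_{\bf i}[f](q) = u + I_q v = Re\, f(x+y{\bf i}) + I_q\, Im\, f(x+y{\bf i})$, as claimed. One must be slightly careful that $I_q{\bf i}$ and ${\bf i}$ need not commute, but here the only products that occur are ${\bf i}\cdot{\bf i}={\bf i}^2$ and scalars times $I_q$ or $I_q{\bf i}$, so no reordering of two distinct imaginary units is needed — that is the one point to check but it causes no trouble.

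For part (2), $f\in Hol_{\bar c}(\Lambda_{\bf i})$ means $f(x-y{\bf i}) = -\overline{f(x+y{\bf i})} = -u + {\bf i}v$. Substituting,
$$P_{\bf i}[f](x+yI_q)=\frac{1}{2}\left[(1+ I_q{\bf i})(-u+{\bf i}v) + (1- I_q {\bf i})(u+{\bf i}v)\right].$$
Now the "$1$" part gives $\frac{1}{2}\left[(-u+{\bf i}v)+(u+{\bf i}v)\right] = {\bf i}v$, and the "$I_q{\bf i}$" part gives $\frac{1}{2}\left[(I_q{\bf i})(-u+{\bf i}v) - (I_q{\bf i})(u+{\bf i}v)\right] = \frac{1}{2}\left[-2u I_q{\bf i}\right] = -u\, I_q{\bf i}$. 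So $P_{\bf i}[f](q) = {\bf i}v - I_q{\bf i}\, u = (v - I_q u){\bf i}$, where in the last step I pull out ${\bf i}$ on the right, noting ${\bf i}v = v{\bf i}$ since $v$ is real and $I_q{\bf i}\,u = u\,I_q{\bf i}$ since $u$ is real. This is exactly $\left( Im\, f(x+y{\bf i}) - I_q\, Re\, f(x+y{\bf i}) \right){\bf i}$. I expect no real obstacle here: the whole proof is a short bookkeeping computation with the quaternion relations ${\bf i}^2=-1$ and the reality of $u,v$; the only thing demanding mild attention is keeping the noncommutative factors $1+I_q{\bf i}$ and $1-I_q{\bf i}$ on the correct (left) side throughout, which the definition of $P_{\bf i}$ already fixes.
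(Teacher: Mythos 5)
Your computation is correct and follows exactly the route the paper intends: the paper's proof consists of the single remark that the result is ``a direct consequence of the C-property, anti-C-property and the Representation Formula,'' and your substitution of $f(x-y{\bf i})=\pm\overline{f(x+y{\bf i})}$ into the defining formula of $P_{\bf i}$ is precisely that computation written out. Nothing is missing.
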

\begin{proof}
It is a direct consequence of the C-property, anti-C-property and the Representation Formula,  \cite{csTrends}.
\end{proof}

\begin{pro} Let $f\in Hol(\Lambda_{\mathbf{i}})$. Then $P_{\bf i}[f]$ is
given by
 $$P_{\bf i}[f](q)=Re \, f (x+y{\bf i}) + I_q  Im \, f(x+y{\bf i}) +  (1+I_q{\bf i}) f_2(x-y {\bf i}),$$
 where $f_2$ is the part of the function $f$ which satisfies the anti-C-property.
\end{pro}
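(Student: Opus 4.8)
The plan is to start from the decomposition $f = f_1 + f_2$ with $f_1 \in Hol_c(\Lambda_{\bf i})$ and $f_2 \in Hol_{\bar c}(\Lambda_{\bf i})$, whose existence and uniqueness are guaranteed by Proposition~\ref{ob3}. Since $P_{\bf i}$ is a real-linear operator (being built from the Representation Formula), we have $P_{\bf i}[f] = P_{\bf i}[f_1] + P_{\bf i}[f_2]$, so it suffices to apply the two formulas of Proposition~\ref{pro1}, one to each summand.

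First I would write, using part~(1) of Proposition~\ref{pro1},
$$
P_{\bf i}[f_1](q) = Re\, f_1(x+y{\bf i}) + I_q\, Im\, f_1(x+y{\bf i}),
$$
and, using part~(2),
$$
P_{\bf i}[f_2](q) = \bigl( Im\, f_2(x+y{\bf i}) - I_q\, Re\, f_2(x+y{\bf i}) \bigr){\bf i}.
$$
The next step is to re-express the $f_1$-contribution in terms of $f$ itself. From the explicit formula in the proof of Corollary~\ref{pro0} (or from $f_1 = \tfrac12(f + Z_{\mathbb C}\circ f\circ Z_{\mathbb C})$), one checks that on $\mathbb{C}({\bf i})$ the real and imaginary parts of $f_1$ are exactly the real and imaginary parts of $f$; that is, $Re\, f_1(x+y{\bf i}) = Re\, f(x+y{\bf i})$ and $Im\, f_1(x+y{\bf i}) = Im\, f(x+y{\bf i})$. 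Hence
$$
P_{\bf i}[f_1](q) = Re\, f(x+y{\bf i}) + I_q\, Im\, f(x+y{\bf i}).
$$

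It then remains to rewrite $P_{\bf i}[f_2](q) = \bigl(Im\, f_2(x+y{\bf i}) - I_q\, Re\, f_2(x+y{\bf i})\bigr){\bf i}$ in the claimed form $(1+I_q{\bf i})\, f_2(x-y{\bf i})$. The key computational step is to use the anti-C-property $f_2(x-y{\bf i}) = -\overline{f_2(x+y{\bf i})} = -Re\, f_2(x+y{\bf i}) + {\bf i}\, Im\, f_2(x+y{\bf i})$, and then expand $(1+I_q{\bf i})f_2(x-y{\bf i})$, multiplying out and collecting the terms with ${\bf i}$ on the right, using ${\bf i}^2 = -1$; this should reproduce $\bigl(Im\, f_2 - I_q Re\, f_2\bigr){\bf i}$ exactly. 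I expect the main (minor) obstacle to be bookkeeping the noncommutativity: $I_q$ and ${\bf i}$ need not commute, so one must keep the scalar real-valued factors $Re\, f_2, Im\, f_2$ separated from the quaternionic units and be careful about the order in products like $I_q{\bf i}\,{\bf i}$ versus ${\bf i}\,I_q{\bf i}$. Once this identity is verified, adding $P_{\bf i}[f_1]$ and $P_{\bf i}[f_2]$ yields the stated expression.
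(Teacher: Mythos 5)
Your overall strategy coincides with the paper's: decompose $f=f_1+f_2$ with $f_1\in Hol_c(\Lambda_{\bf i})$ and $f_2\in Hol_{\bar c}(\Lambda_{\bf i})$ via Proposition \ref{ob3}, use the real-linearity of $P_{\bf i}$ together with Proposition \ref{pro1} on each summand, and then rewrite. However, two of your intermediate identities are false, and the computation you defer would not come out as you predict.

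First, it is not true that $Re\, f_1=Re\, f$ and $Im\, f_1=Im\, f$ on $\mathbb{C}({\bf i})$. From $f=f_1+f_2$ one has $Re\, f=Re\, f_1+Re\, f_2$, and the anti-C-property of $f_2$ is a relation between the values at $z$ and at $\bar z$; it does not force $Re\, f_2$ to vanish pointwise (for instance $f_2(z)=iz$ is anti-C and has $Re\, f_2(x+iy)=-y$). Indeed $f_1=\tfrac12\left(f+Z_{\mathbb C}\circ f\circ Z_{\mathbb C}\right)$ gives $Re\, f_1(z)=\tfrac12\left(Re\, f(z)+Re\, f(\bar z)\right)$, not $Re\, f(z)$. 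Second, the identity you propose to verify, $\bigl(Im\, f_2(x+y{\bf i})-I_q\,Re\, f_2(x+y{\bf i})\bigr){\bf i}=(1+I_q{\bf i})f_2(x-y{\bf i})$, is also false: writing $f_2(x-y{\bf i})=-\overline{f_2(x+y{\bf i})}=-Re\, f_2+{\bf i}\,Im\, f_2$ and expanding (the real scalars $Re\,f_2,\ Im\,f_2$ commute with everything, and ${\bf i}^2=-1$) yields
$$(1+I_q{\bf i})f_2(x-y{\bf i})=\bigl(Im\, f_2-I_q\,Re\, f_2\bigr){\bf i}-\bigl(Re\, f_2+I_q\,Im\, f_2\bigr),$$
so that in fact $P_{\bf i}[f_2](q)=(1+I_q{\bf i})f_2(x-y{\bf i})+Re\, f_2(x+y{\bf i})+I_q\,Im\, f_2(x+y{\bf i})$. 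The two errors exactly compensate: the remainder $Re\, f_2+I_q\,Im\, f_2$ from the second computation is precisely what is needed to upgrade $Re\, f_1+I_q\,Im\, f_1$ to $Re\, f+I_q\,Im\, f$. The statement is therefore true and your plan is repairable by keeping these remainder terms explicitly, but as written both key steps fail.
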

\begin{proof}  Let $f_1, f_2$ be the parts of $f$ which  {satisfy}, respectively, the C-property and the anti-C-property.
Then
$$\begin{array}{ll}
  P_{\bf i}[f](q)& =P_{\bf i}[f_1](q)+ P_{\bf i}[f_2](q)\\
  &\\
  & = Re \ f_1(x+y{\bf i}) +  I_q Im \, f_1(x+y{\bf i})  +  Im \, f_2(x+y {\bf i}){\bf i} - Re \, f_2(x+y{\bf i}) \  \ I_q {\bf i}\\
  &\\
  &= Re \, f (x+y{\bf i}) + I_q  Im \, f(x+y{\bf i})   +   (1+I_q{\bf i}) f_2(x-y {\bf i}).
\end{array}$$
\end{proof}

In the sequel, the quaternionic conjugation will be denoted  by $ Z_{\mathbb H}(q)=\bar q$.
\begin{defin}
Let $\Lambda$ be an axially symmetric open set in $\mathbb H$.
\begin{enumerate}
\item
Consider a function $f\in \mathcal{SR}(\Lambda)$, we say that $f$ satisfies that quaternionic C-property  if and only if $f=Z_{\mathbb H }\circ f\circ Z_{\mathbb H}$.
We say that $f$ satisfies the anti-C-property if and only if $f=-Z_{\mathbb H }\circ f\circ Z_{\mathbb H}$.
\item The subset of $\mathcal{SR}(\Lambda)$ of functions satisfying the C-property (resp. the anti-C-property) will be denoted by
 $\mathcal{SR}_c(\Lambda)$  (resp. $\mathcal{SR}_{\bar c}(\Lambda)$).
\end{enumerate}
\end{defin}

\begin{ob}
 {\rm
 Note that there exist slice regular functions which  do not satisfy the quaternionic C-property or anti-C-property. For example, given a quaternion $\lambda \in \mathbb H\setminus \mathbb R$ the function $f(q)= q\lambda $ is a slice regular function but  $f$ does not belong to $\mathcal{SR}_c(\Lambda)$.
}
\end{ob}
\begin{ob}
 {\rm
It is easy to verify that a slice regular function which admits power series expansion at a real point belongs to $\mathcal{SR}_c(\Lambda)$ if and only if the coefficients of the power series are real numbers. Similarly,  a slice regular function which admits power series expansion at a real point belongs to $\mathcal{SR}_{\bar c}(\Lambda)$ if and only if the coefficients of the power series are purely imaginary quaternions. In other papers, sometimes the set $\mathcal{SR}_c(\Lambda)$ is denoted by $\mathcal{N}(\Lambda)$ and  the functions belonging to it are called real slice regular functions (see \cite{gp}).
}
\end{ob}

\begin{pro}\label{cor3}
Let $\Lambda$ be an axially symmetric s-domain and recall that $\Lambda_{{\bf i}}=\Lambda\cap\mathbb{C}_{{\bf i}}$.

     \begin{enumerate}
       \item If $f\in Hol_c(\Lambda_{\bf i})$, then $P_{\bf i}[f]\in\mathcal{SR}_{ c}(\Lambda)$.
       \item If $f\in Hol_{\bar c}(\Lambda_{\bf i})$, then  $P_{\bf i}[f]\in\mathcal{SR}_{\bar c}(\Lambda)$.
             \end{enumerate}
\end{pro}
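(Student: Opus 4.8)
The plan is to combine Proposition~\ref{pro1} with the definition of the quaternionic C-property and anti-C-property, and to verify the required identity $P_{\bf i}[f]=Z_{\mathbb H}\circ P_{\bf i}[f]\circ Z_{\mathbb H}$ in case (1), resp.\ $P_{\bf i}[f]=-Z_{\mathbb H}\circ P_{\bf i}[f]\circ Z_{\mathbb H}$ in case (2), by a direct computation. The only elementary facts I will use are: quaternionic conjugation is an anti-automorphism with $\overline I=-I$ for every $I\in\mathbb S^2$, so in particular $\overline{\bf i}=-{\bf i}$; writing $q=x+I_qy$ with $x=q_0$ and $y=\|\vec q\|\ge 0$, one has $\bar q=x+I_{\bar q}y$ with the same $x$ and $y$ and with $I_{\bar q}=-I_q$ (for real $q$ there is nothing to prove); and $Re\,f$, $Im\,f$ are real-valued, hence central in $\mathbb H$.

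For part (1) I would start from Proposition~\ref{pro1}(1), which for $f\in Hol_c(\Lambda_{\bf i})$ gives
\[
P_{\bf i}[f](q)=Re\,f(x+y{\bf i})+I_q\,Im\,f(x+y{\bf i}),\qquad q=x+I_qy.
\]
Evaluating this at $\bar q$ leaves $x,y$ unchanged and replaces $I_q$ by $-I_q$, whence $P_{\bf i}[f](\bar q)=Re\,f(x+y{\bf i})-I_q\,Im\,f(x+y{\bf i})$; applying $Z_{\mathbb H}$ and using $\overline{I_q}=-I_q$ together with the centrality of $Re\,f,Im\,f$ yields $\overline{P_{\bf i}[f](\bar q)}=Re\,f(x+y{\bf i})+I_q\,Im\,f(x+y{\bf i})=P_{\bf i}[f](q)$. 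Hence $Z_{\mathbb H}\circ P_{\bf i}[f]\circ Z_{\mathbb H}=P_{\bf i}[f]$, i.e.\ $P_{\bf i}[f]\in\mathcal{SR}_c(\Lambda)$.

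For part (2), Proposition~\ref{pro1}(2) gives, for $f\in Hol_{\bar c}(\Lambda_{\bf i})$,
\[
P_{\bf i}[f](q)=\bigl(Im\,f(x+y{\bf i})-I_q\,Re\,f(x+y{\bf i})\bigr){\bf i},
\]
and I would run the same computation as in part (1): passing to $\bar q$ flips $I_q\mapsto-I_q$, while conjugating the right-hand side now produces an overall change of sign, stemming from $\overline{\bf i}=-{\bf i}$ combined with $\overline{I_q}=-I_q$ and the order reversal in the product. This leads to $Z_{\mathbb H}\circ P_{\bf i}[f]\circ Z_{\mathbb H}=-P_{\bf i}[f]$, that is $P_{\bf i}[f]\in\mathcal{SR}_{\bar c}(\Lambda)$.

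The step I expect to be the main obstacle is precisely this bookkeeping in part (2): since conjugation reverses products and ${\bf i}$ and $I_q$ need not commute, it is safest first to rewrite $P_{\bf i}[f](q)$ in the fully expanded normal form $Im\,f(x+y{\bf i})\,{\bf i}-Re\,f(x+y{\bf i})\,I_q{\bf i}$ (allowed since $Re\,f$, $Im\,f$ are real scalars), then to express $P_{\bf i}[f](\bar q)$ in the same normal form, and only then to compare the two expressions term by term. A less computational alternative, avoiding the non-commutative algebra, is to pass to power series at a real point $x_0\in\Lambda\cap\mathbb R$: since $P_{\bf i}\circ Q_{\bf i}=I_{\mathcal{SR}(\Lambda)}$, the Taylor coefficients of $P_{\bf i}[f]$ at $x_0$ coincide with those of the holomorphic function $f$ at $x_0$, and repeatedly differentiating the relation $f=\pm\,Z_{\mathbb C}\circ f\circ Z_{\mathbb C}$ at $x_0$ shows those coefficients to be real when $f\in Hol_c(\Lambda_{\bf i})$ and purely imaginary when $f\in Hol_{\bar c}(\Lambda_{\bf i})$; one then concludes from the fact that a slice regular function lies in $\mathcal{SR}_c(\Lambda)$, resp.\ $\mathcal{SR}_{\bar c}(\Lambda)$, exactly when its power-series coefficients at a real point are real, resp.\ purely imaginary.
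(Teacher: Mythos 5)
Part (1) of your argument is correct and is essentially the paper's own proof: both start from the formula $P_{\bf i}[f](q)=Re \, f(x+y{\bf i})+I_q\, Im \, f(x+y{\bf i})$ of Proposition \ref{pro1} and use that conjugation flips the sign of $I_q$ twice while leaving the real scalars $Re\, f$, $Im \, f$ untouched.

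Part (2) breaks down exactly at the step you flag as the main obstacle, and the normal-form comparison you propose as a safeguard is what exposes it. Write $a=Re \, f(x+y{\bf i})$, $b=Im \, f(x+y{\bf i})$ (real), so that $P_{\bf i}[f](q)=b\,{\bf i}-a\,I_q{\bf i}$ and $P_{\bf i}[f](\bar q)=b\,{\bf i}+a\,I_q{\bf i}$. Then
$$
\overline{P_{\bf i}[f](\bar q)}=b\,\overline{{\bf i}}+a\,\overline{{\bf i}}\;\overline{I_q}=-b\,{\bf i}+a\,{\bf i}\,I_q,
\qquad\text{while}\qquad
-P_{\bf i}[f](q)=-b\,{\bf i}+a\,I_q\,{\bf i}.
$$
These agree only if ${\bf i}$ and $I_q$ commute; for $I_q\perp{\bf i}$ they anticommute, so the two expressions differ whenever $a\neq 0$ (the identity one actually obtains is $\overline{P_{\bf i}[f](\bar q)}={\bf i}\,P_{\bf i}[f](q)\,{\bf i}$, not $-P_{\bf i}[f](q)$). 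No bookkeeping can repair this, because the claim itself fails for the literal definition of $\mathcal{SR}_{\bar c}(\Lambda)$: take $f(z)=z{\bf i}$, which lies in $Hol_{\bar c}(\Lambda_{\bf i})$ and has $P_{\bf i}[f](q)=q{\bf i}$, yet $-\overline{\bar q\,{\bf i}}={\bf i}q\neq q{\bf i}$ when $q$ is a nonzero multiple of ${\bf j}$. For what it is worth, the paper's own proof of (2) --- reducing to (1) via $f\mapsto{\bf i}f$ and $P_{\bf i}[{\bf i}f]=P_{\bf i}[f]{\bf i}$ --- stumbles on the same point: its final identity moves $\overline{{\bf i}}=-{\bf i}$ from the left of $\overline{P_{\bf i}[f](\bar q)}$ to its right without justification. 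Your power-series fallback does not rescue the claim either, since it leans on the earlier remark that purely imaginary coefficients characterize $\mathcal{SR}_{\bar c}(\Lambda)$, and that remark again requires $q^n a=a q^n$; unlike the C-property, whose real coefficients are central, the anti-C-property does not survive the non-commutativity of $\mathbb{H}$. So part (1) stands, but part (2) needs either a weaker conclusion (such as the conjugation identity displayed above) or a restriction to situations where the relevant imaginary units commute.
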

\begin{proof}
\begin{enumerate}
  \item  Let $q\in \Lambda$, then Proposition \ref{pro1} implies that
$$P_{\bf i}[f](q)= Re \, f(x+y {\bf i}) + I_q Im \, f(x+y {\bf i}).$$
Now, we see that
$$  \overline{P_{\bf i}[f]( q)}  =Re \, f(x+y {\bf i}) +  (- I_q)  Im \, f(x+y {\bf i})=
  P_{\bf i}[f](x+y(-I_q)) =  P_{\bf i}[f](\bar q).$$

  \item  {It is based on the observation that if $f\in Hol_{\bar c}(\Lambda_{\bf i})$ then ${\bf i} f$ satisfies the C-property.  By the previous step we have}
   $$
   Z_{\mathbb H}\circ P_{\bf i}[ {\bf i} f] \circ Z_{\mathbb H}= P_{\bf i}[{\bf i}f].
   $$
We finish the  proof   using the following facts:
   $$
   P_{\bf i}[{\bf i}f]=P_{\bf i}[f]{\bf i},
   $$
   and
   $$
   Z_{\mathbb H}\circ P_{\bf i}[ {\bf i} f] \circ Z_{\mathbb H}=-Z_{\mathbb H}\circ P_{\bf i}[ f] \circ Z_{\mathbb H}{\bf i}.
   $$
   \end{enumerate}
\end{proof}

The following result characterizes the image of  $Hol_{c}(\Lambda_{\bf i})$ through $P_{\bf i}$:
\begin{pro}\label{pro18}
Let $\Lambda\subseteq\mathbb H$ be an axially symmetric open set. Then
$$
\mathcal{SR}_c(\Lambda)= P_{\bf i} [Hol_{c}(\Lambda_{\bf i})].
$$
\end{pro}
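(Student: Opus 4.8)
The plan is to prove the two inclusions separately, exploiting that $P_{\bf i}$ and $Q_{\bf i}$ are mutually inverse bijections between $\mathcal{SR}(\Lambda)$ and $Hol(\Lambda_{\bf i})+Hol(\Lambda_{\bf i}){\bf j}$. The inclusion $P_{\bf i}[Hol_c(\Lambda_{\bf i})]\subseteq\mathcal{SR}_c(\Lambda)$ is exactly Proposition~\ref{cor3}(1), so nothing new is needed there. For the converse it suffices, by applying $Q_{\bf i}$ and using $P_{\bf i}\circ Q_{\bf i}=I_{\mathcal{SR}(\Lambda)}$, to show that $Q_{\bf i}[\mathcal{SR}_c(\Lambda)]\subseteq Hol_c(\Lambda_{\bf i})$; that is, that the restriction to $\Lambda_{\bf i}$ of a slice regular function with the quaternionic C-property is a $\mathbb C({\bf i})$-valued holomorphic function with the C-property.

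So let $f\in\mathcal{SR}_c(\Lambda)$ and put $g:=Q_{\bf i}[f]=f|_{\Lambda_{\bf i}}$, so that $f=P_{\bf i}[g]$. By the Splitting Lemma write $g=F+G{\bf j}$ with $F,G\in Hol(\Lambda_{\bf i})$. Since $\Lambda_{\bf i}$ is $Z$-invariant and the quaternionic conjugate of $z=x+y{\bf i}$ is precisely its conjugate $\bar z=x-y{\bf i}$ inside $\mathbb C({\bf i})$, the identity $f=Z_{\mathbb H}\circ f\circ Z_{\mathbb H}$ restricts to $g(\bar z)=\overline{g(z)}$ on $\Lambda_{\bf i}$. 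Expanding the right-hand side with $\overline{G(z){\bf j}}=\bar{\bf j}\,\overline{G(z)}=-{\bf j}\,\overline{G(z)}$ and ${\bf j}\,w=\bar w\,{\bf j}$ for $w\in\mathbb C({\bf i})$ gives $\overline{g(z)}=\overline{F(z)}-G(z){\bf j}$; comparing the $\mathbb C({\bf i})$- and $\mathbb C({\bf i}){\bf j}$-components of $g(\bar z)=F(\bar z)+G(\bar z){\bf j}$ with this yields
$$F(\bar z)=\overline{F(z)}\quad\text{and}\quad G(\bar z)=-G(z),\qquad z\in\Lambda_{\bf i}.$$
The first relation means precisely $F\in Hol_c(\Lambda_{\bf i})$.

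It remains to conclude that $G\equiv 0$. Evaluating $G(\bar z)=-G(z)$ at the real points of $\Lambda_{\bf i}$---which form a nonempty real interval, since under the standing hypotheses of this section $\Lambda_{\bf i}$ is a domain meeting the real axis---yields $G(x)=-G(x)=0$ there; as $G$ is holomorphic on the connected open set $\Lambda_{\bf i}$, the identity theorem forces $G\equiv 0$. Hence $g=F\in Hol_c(\Lambda_{\bf i})$ and $f=P_{\bf i}[g]\in P_{\bf i}[Hol_c(\Lambda_{\bf i})]$, which establishes the equality. The only delicate point is this last step: the vanishing of the second splitting component $G$ genuinely uses that $\Lambda_{\bf i}$ is connected and intersects $\mathbb R$ (on a general axially symmetric set avoiding the real axis, a holomorphic $G$ with $G(\bar z)=-G(z)$ need not vanish), while the rest is routine bookkeeping of the quaternionic conjugation against the basis $\{1,{\bf j}\}$, using that ${\bf j}$ anticommutes with ${\bf i}$.
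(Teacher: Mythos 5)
Your proof is correct, and its overall strategy coincides with the paper's: one inclusion is Proposition \ref{cor3}, and for the other you restrict the quaternionic C-property to the slice, apply the Splitting Lemma, and compare components in $\mathbb{C}({\bf i})\oplus\mathbb{C}({\bf i}){\bf j}$. The difference is in how the second splitting component is killed. The paper first refines the splitting by decomposing $f_1=h_0+h_1{\bf i}$, $f_2=h_2+h_3{\bf i}$ with $h_\ell\in Hol_c(\Lambda_{\bf i})$ via Corollary \ref{pro0}, and then deduces $h_1=h_2=h_3=0$ from the C-property of each $h_\ell$ (for $h_2,h_3$ this forces them to be purely imaginary valued, hence locally constant, and then zero). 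You instead work directly with $g=F+G{\bf j}$, extract the functional equations $F(\bar z)=\overline{F(z)}$ and $G(\bar z)=-G(z)$, and conclude $G\equiv 0$ from the identity theorem after observing that $G$ vanishes on $\Lambda_{\bf i}\cap\mathbb{R}$. Both arguments are valid; yours avoids invoking the four-fold decomposition and, usefully, makes explicit that the conclusion relies on $\Lambda_{\bf i}$ being connected and meeting the real axis (the standing s-domain hypothesis of the section), a dependence that is also present, though less visible, in the paper's step $h_2=h_3=0$ and is worth flagging given that the proposition is stated for axially symmetric open sets.
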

\begin{proof}
Proposition \ref{cor3} shows that $P_{\bf i} [Hol_{c}(\Lambda_{\bf i})]\subseteq \mathcal{SR}_c(\Lambda)$. Now suppose
that
$f\in \mathcal{SR}_c(\Lambda) $, i.e.,
$f(q)=\overline{ f(\bar q)}$,  $\forall q\in \Lambda$. In particular,  for any $q\in \Lambda_{\bf i}$, one has
\begin{equation}\label{equ16}
Q_{\bf i }[ f](q)=\overline{Q_{\bf i} [f](\bar q)}.
\end{equation}
On the  other hand, the Splitting Lemma implies that for any choice of ${\bf i}, {\bf j}\in\mathbb{S}^2$ with ${\bf i}\perp {\bf j}$ there exist  $f_1,f_2\in Hol(\Lambda_{\bf i})$ such that
$Q_{\bf  i}[f]=f_1+f_2{\bf j}$. From  (\ref{hol-direct-sum})  there exist $h_0, h_1, h_2,h_3\in Hol_{c}(\Lambda_{\bf i})$, such that $f_1=h_0+h_1{\bf i}$ and $f_2=h_2+h_3{\bf i}$.
Then we get
$$
Q_{\bf  i}[f]=h_0+h_1{\bf i}  +  h_2 {\bf j}  +h_3{\bf i} {\bf j},
$$
and (\ref{equ16}) implies that
$$
h_0(q)+h_1(q){\bf i}  +  h_2(q) {\bf j}  +h_3(q){\bf i}{\bf j}   =  \overline{ h_0(\bar q)} -  {\bf i}\overline{h_1(\bar q)}  -  {\bf j} \overline{h_2(\bar q)} - {\bf i} {\bf j}  \overline{h_3(\bar q) } ,  \quad \forall q\in \Lambda_{\bf i}.
$$
Using the C-property of each  function $h_\ell$, $\ell=0,\ldots, 3$, one has that $h_1=h_2=h_3=0$. Then $f=P_{\bf i}[h_0]$ and so $\mathcal{SR}_c(\Lambda)=P_{\bf i}[Hol_c(\Lambda_{\bf i})]$.
\end{proof}

The proof of the previous result allows to refine the Splitting Lemma,  which, as we recalled above, establishes the existence of two holomorphic functions such that $Q_{\bf  i}[f]=f_1+f_2{\bf j}$. In fact we have:
\begin{cor} (Refined Splitting Lemma)
Let $\Lambda$ be an axially symmetric open set, $f\in\mathcal{SR}(\Lambda)$ and ${\bf i}, {\bf j}\in\mathbb{S}^2$ with ${\bf i}\perp {\bf j}$. Then there exist four functions $h_\ell\in  Hol_{c}(\Lambda_{\bf i})$, $\ell=0,\ldots, 3$ such that
$$
Q_{\bf i}[f]=h_0+h_1{\bf i}  +  h_2 {\bf j}  +h_3{\bf i} {\bf j}.
$$
\end{cor}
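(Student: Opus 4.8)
The plan is to read off the statement from two results already in hand: the classical Splitting Lemma and Corollary \ref{pro0} (equivalently, the decomposition \eqref{hol-direct-sum}), applied \emph{inside} the complex plane $\mathbb{C}_{\bf i}$. In fact this is precisely the argument carried out in the first half of the proof of Proposition \ref{pro18}, isolated as a self-contained statement.

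First I would apply the Splitting Lemma (see \cite{csTrends}): since $f\in\mathcal{SR}(\Lambda)$ and ${\bf i}\perp{\bf j}$ in $\mathbb{S}^2$, the restriction $Q_{\bf i}[f]=f\mid_{\Lambda_{\bf i}}$ can be written as $Q_{\bf i}[f]=f_1+f_2{\bf j}$ for suitable $f_1,f_2\in Hol(\Lambda_{\bf i})$, where $Hol(\Lambda_{\bf i})$ denotes holomorphic functions from $\Lambda_{\bf i}$ to $\mathbb{C}_{\bf i}$. Next, recall that $\Lambda_{\bf i}$ is $Z$-invariant in $\mathbb{C}_{\bf i}$ because $\Lambda$ is axially symmetric; hence Corollary \ref{pro0} applies with the imaginary unit of the slice $\mathbb{C}_{\bf i}$ being ${\bf i}$ itself. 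This yields $h_0,h_1\in Hol_c(\Lambda_{\bf i})$ with $f_1=h_0+h_1{\bf i}$ and $h_2,h_3\in Hol_c(\Lambda_{\bf i})$ with $f_2=h_2+h_3{\bf i}$; explicitly one may take $h_0=\tfrac12\big(f_1+Z_{\mathbb C}\circ f_1\circ Z_{\mathbb C}\big)$, $h_1=\tfrac{{\bf i}}{2}\big(-f_1+Z_{\mathbb C}\circ f_1\circ Z_{\mathbb C}\big)$, and similarly for $h_2,h_3$ from $f_2$.

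Substituting these into $Q_{\bf i}[f]=f_1+f_2{\bf j}$ and using associativity of quaternionic multiplication gives
$$
Q_{\bf i}[f]=h_0+h_1{\bf i}+(h_2+h_3{\bf i}){\bf j}=h_0+h_1{\bf i}+h_2{\bf j}+h_3{\bf i}{\bf j},
$$
which is the claimed decomposition, with all four components in $Hol_c(\Lambda_{\bf i})$ (note $Hol_c(\Lambda_{\bf i})\subseteq Hol(\Lambda_{\bf i})$, so holomorphy is automatic). If desired, uniqueness of the $h_\ell$ follows from the uniqueness statements in the Splitting Lemma and in Corollary \ref{pro0}.

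There is no genuine obstacle here, since the result reduces to two already-established facts; the only point needing care is bookkeeping, namely that the generic imaginary unit ``$i$'' appearing in Corollary \ref{pro0} must be read as the unit ${\bf i}$ of the specific slice $\mathbb{C}_{\bf i}$, so that the products $h_1{\bf i}$ and $h_3{\bf i}$ are interpreted internally in $\mathbb{C}_{\bf i}$ and the four functions $h_\ell$ genuinely lie in $Hol_c(\Lambda_{\bf i})$ as stated.
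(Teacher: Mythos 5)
Your proposal is correct and follows exactly the paper's route: the paper derives this corollary by isolating the first half of the proof of Proposition \ref{pro18}, namely applying the classical Splitting Lemma to get $Q_{\bf i}[f]=f_1+f_2{\bf j}$ and then the decomposition \eqref{hol-direct-sum} inside the slice $\mathbb{C}_{\bf i}$ to split each of $f_1,f_2$ into $Hol_c(\Lambda_{\bf i})$ components. Your explicit formulas for $h_0,h_1$ and the remark about reading the imaginary unit of Corollary \ref{pro0} as ${\bf i}$ are consistent with what the paper does implicitly.
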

 {This result appeared first in Remark 3 in \cite{gp}, in a more general setting, but it has been called Refined Splitting Lemma in \cite{css}.}
Another useful consequence is:
\begin{cor}\label{comm}
For any  $f,g \in \mathcal{SR}_c(\Lambda) $, one has $fg=gf$.
\end{cor}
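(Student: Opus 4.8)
The plan is to reduce the identity to a one–line computation on a single slice, using the explicit form that intrinsic functions acquire through the Representation Formula.

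By Proposition \ref{pro18} we have $\mathcal{SR}_c(\Lambda)=P_{\bf i}[Hol_c(\Lambda_{\bf i})]$, so I would first write $f=P_{\bf i}[h]$ and $g=P_{\bf i}[k]$ with $h,k\in Hol_c(\Lambda_{\bf i})$, and set $u=Re\, h$, $v=Im\, h$, $p=Re\, k$, $r=Im\, k$, which are real–valued functions on $\Lambda_{\bf i}$. Proposition \ref{pro1}(1) then gives, for every $q\in\Lambda$ with $x=q_0$, $y=\|\vec q\|$,
$$ f(q)=u(x+y{\bf i})+I_q\,v(x+y{\bf i}),\qquad g(q)=p(x+y{\bf i})+I_q\,r(x+y{\bf i}). $$

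The next step is to multiply these two expressions directly. The crucial remarks are that the four values $u(x+y{\bf i})$, $v(x+y{\bf i})$, $p(x+y{\bf i})$, $r(x+y{\bf i})$ are real and hence commute with the imaginary unit $I_q\in\mathbb{S}^2$, and that $I_q^2=-1$. Carrying out the expansion (all functions evaluated at $x+y{\bf i}$) yields
$$ f(q)g(q)=(up-vr)+I_q\,(ur+vp), $$
and the analogous computation for the reversed product gives $g(q)f(q)=(pu-rv)+I_q\,(pv+ru)$. Since multiplication of real numbers is commutative, the two right–hand sides coincide, so $f(q)g(q)=g(q)f(q)$ for all $q\in\Lambda$; for real $q$ the $I_q$–term is absent, so no ambiguity occurs there.

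I do not expect any genuine obstacle here; the only point deserving a word is the meaning of the product $fg$. What is proved is the equality of the two pointwise products of $f$ and $g$ as $\mathbb{H}$–valued functions, and since $f$ and $g$ are intrinsic this pointwise product is again slice regular and agrees with the slice ($*$–)product, so the statement is unambiguous. Equivalently, the computation above shows that $P_{\bf i}$ maps $Hol_c(\Lambda_{\bf i})$ bijectively onto $\mathcal{SR}_c(\Lambda)$ while respecting pointwise products, so that $\mathcal{SR}_c(\Lambda)$ inherits the commutativity of the ring $Hol_c(\Lambda_{\bf i})$.
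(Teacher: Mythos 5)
Your proof is correct and follows essentially the same route as the paper: both reduce via Proposition \ref{pro18} and Proposition \ref{pro1} to the observation that $f(q)$ and $g(q)$ lie in the commutative plane $\mathbb{C}(I_q)$, you merely carry out the resulting multiplication explicitly where the paper appeals to commutativity of that slice directly. Your closing remark on the pointwise product agreeing with the slice product for intrinsic functions is a sensible clarification but not a departure from the paper's argument.
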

\begin{proof}
Proposition  \ref{pro18}   implies that there exist $h_1,h_2\in Hol_c(\Lambda_{\bf i})$ such that $f=P_{\bf i}[h_1]$ and $g=P_{\bf i}[h_2]$, and Proposition   \ref{pro1} says that for each $q\in \Lambda$ the values $f(q)=P_{\bf i}[h_1](q)$ and
$g(q)=P_{\bf i}[h_2](q)$ belong  to the same slice, specifically to the slice generated by $\{1, {\bf i}_q \}$.
\end{proof}
The  next result generalizes property (\ref{hol-direct-sum}) in Corollary \ref{pro0} to the case of slice regular functions, providing a decomposition of $\mathcal{SR}(\Lambda)$.
\begin{pro}\label{pro20}
Let $\Lambda$ be an axially symmetric s-domain and let $\{1,{\bf i},{\bf j},{\bf i}{\bf j}\}$ be a basis of $\mathbb{H}$. Then:
$$\mathcal{SR}(\Lambda)=\mathcal{SR}_c(\Lambda)  \oplus \mathcal{SR}_c(\Lambda){\bf i}   \oplus  \mathcal{SR}_c(\Lambda)  {\bf j}\oplus \mathcal{SR}_c(\Lambda) \bf{ij}.
$$
\end{pro}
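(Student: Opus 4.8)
The plan is to reduce everything to the Refined Splitting Lemma by restricting to one slice $\Lambda_{\bf i}$, and then to transport the resulting four-term holomorphic decomposition back to $\mathbb{H}$ via the operator $P_{\bf i}$.

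\textbf{Existence of the decomposition.} Given $f\in\mathcal{SR}(\Lambda)$, I would first apply the Refined Splitting Lemma to $Q_{\bf i}[f]=f\mid_{\Lambda_{\bf i}}$, obtaining $h_0,h_1,h_2,h_3\in Hol_c(\Lambda_{\bf i})$ with $Q_{\bf i}[f]=h_0+h_1{\bf i}+h_2{\bf j}+h_3{\bf ij}$. Since $h_0+h_1{\bf i}\in Hol(\Lambda_{\bf i})$ and $(h_2+h_3{\bf i}){\bf j}\in Hol(\Lambda_{\bf i}){\bf j}$, this expression is a legitimate element of the domain of $P_{\bf i}$, and — reading off the explicit formula for $P_{\bf i}$ — the operator $P_{\bf i}$ is additive and satisfies $P_{\bf i}[g\lambda]=P_{\bf i}[g]\lambda$ for every constant $\lambda\in\mathbb{H}$ and every $g$ in its domain. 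Applying $P_{\bf i}$ and using $P_{\bf i}\circ Q_{\bf i}=I_{\mathcal{SR}(\Lambda)}$ I would get
$$
f=P_{\bf i}[h_0]+P_{\bf i}[h_1]{\bf i}+P_{\bf i}[h_2]{\bf j}+P_{\bf i}[h_3]{\bf ij},
$$
and Proposition \ref{cor3}(1) tells me each $P_{\bf i}[h_\ell]$ lies in $\mathcal{SR}_c(\Lambda)$. Hence $f$ belongs to $\mathcal{SR}_c(\Lambda)+\mathcal{SR}_c(\Lambda){\bf i}+\mathcal{SR}_c(\Lambda){\bf j}+\mathcal{SR}_c(\Lambda){\bf ij}$.

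\textbf{Directness.} Here I would suppose $g_0+g_1{\bf i}+g_2{\bf j}+g_3{\bf ij}=0$ on $\Lambda$ with all $g_\ell\in\mathcal{SR}_c(\Lambda)$, and evaluate on the real axis: $\Lambda$ meets $\mathbb{R}$ because it is an s-domain, and for $x\in\Lambda\cap\mathbb{R}$ the C-property gives $g_\ell(x)=\overline{g_\ell(\bar x)}=\overline{g_\ell(x)}\in\mathbb{R}$. Since $\{1,{\bf i},{\bf j},{\bf ij}\}$ is an $\mathbb{R}$-basis of $\mathbb{H}$ (using ${\bf i}\perp{\bf j}$), the vanishing relation forces $g_\ell\equiv 0$ on $\Lambda\cap\mathbb{R}$ for each $\ell$; the Identity Principle for slice regular functions on an s-domain (equivalently, $g_\ell=P_{\bf i}[Q_{\bf i}[g_\ell]]$ with $Q_{\bf i}[g_\ell]$ holomorphic on the connected set $\Lambda_{\bf i}$ and vanishing on a real interval) then yields $g_\ell\equiv 0$. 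This shows the four subspaces are in direct sum and finishes the proof.

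\textbf{Where the work is.} All the real content is already packaged in the Refined Splitting Lemma; after that, the existence part is just right-linearity bookkeeping for $P_{\bf i}$ and the directness part is an elementary normalization on the real axis plus the standard Identity Principle. The two points deserving a line of care are: checking that the four-term expression produced by the Splitting Lemma actually lies in $Hol(\Lambda_{\bf i})+Hol(\Lambda_{\bf i}){\bf j}$ — which is exactly where the hypothesis ${\bf i}\perp{\bf j}$ enters, so that ${\bf ij}$ has the form ``element of $\mathbb{C}({\bf i})$ times ${\bf j}$'' — and noting that, although the components $h_\ell$ (hence the $P_{\bf i}[h_\ell]$) are built using an auxiliary choice of ${\bf j}$, the resulting decomposition of $f$ does not depend on that choice, which is immediate from the uniqueness established in the directness step.
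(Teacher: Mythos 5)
Your proof is correct. The existence half is essentially the paper's own argument: the authors likewise split $Q_{\bf i}[f]=f_1+f_2{\bf j}$, decompose $f_1,f_2$ via (\ref{hol-direct-sum}) into $Hol_{c}(\Lambda_{\bf i})$ components, and push the four pieces forward with $P_{\bf i}$; your explicit appeal to the right $\mathbb{H}$-linearity $P_{\bf i}[g\lambda]=P_{\bf i}[g]\lambda$ in fact repairs a small slip in the paper, whose displayed formula for $f$ omits the constants ${\bf i}$, ${\bf j}$, ${\bf ij}$. Where you genuinely diverge is the directness step. The paper works slice-wise: it takes $f\in\mathcal{SR}_c(\Lambda)\cap\mathcal{SR}_c(\Lambda){\bf i}$, writes $f=P_{\bf i}[h_1]$ and $f=P_{\bf i}[h_2]{\bf i}$ with $h_1,h_2\in Hol_c(\Lambda_{\bf i})$, uses the C-property on $\Lambda_{\bf i}$ to force $h_1=h_2=0$, and then declares the remaining pairwise intersections to be handled ``similarly''. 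Your argument --- annihilate a vanishing four-term combination by evaluating on $\Lambda\cap\mathbb{R}$, where the C-property forces each coefficient function to take real values so that linear independence of $\{1,{\bf i},{\bf j},{\bf ij}\}$ over $\mathbb{R}$ makes all four summands vanish there, and then invoke the Identity Principle on the connected slice $\Lambda_{\bf i}$ --- is both more elementary and logically more complete: for a sum of four subspaces, pairwise trivial intersections do not by themselves imply directness (one needs $V_k\cap\sum_{j\ne k}V_j=\{0\}$), whereas showing that a vanishing sum has vanishing summands is exactly the right condition. So your route buys an airtight directness step at no extra cost, while the paper's route stays entirely inside the slice formalism but, as literally written, verifies slightly less than the claimed direct-sum decomposition requires. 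Your closing remark that the decomposition is independent of the auxiliary role of ${\bf j}$ is a harmless bonus that indeed follows from uniqueness.
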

\begin{proof}
Let   $f\in \mathcal{SR}(\Lambda)$, then	the Splitting Lemma implies the existence of  $f_1,f_2\in Hol(\Lambda_{\bf i})$ such that
$Q_{\bf  i}[f]=f_1+f_2{\bf j}$. By formula (\ref{hol-direct-sum}),  there exist $h_0, h_1, h_2,h_3\in Hol_{c}(\Lambda_{\bf i})$, such that $f_1=h_0+h_1{\bf i}$ and $f_2=h_2+h_3{\bf i}$.
Thus we can write
$$Q_{\bf  i}[f]=h_0+h_1{\bf i}  +  h_2 {\bf j}  +h_3{\bf i} {\bf j},$$
and
$$f= P_{\bf i}[h_0]+ P_{\bf i}[h_1]+ P_{\bf i}[h_2]+ P_{\bf i}[h_3].$$
Proposition \ref{pro18}  implies that
$\mathcal{SR}(\Lambda)=\mathcal{SR}_c(\Lambda)  + \mathcal{SR}_c(\Lambda){\bf i}   + \mathcal{SR}_c(\Lambda)  {\bf j}+ \mathcal{SR}_c(\Lambda) \bf{ij}$.

To show that the sum is a direct sum,
suppose that   $f\in \mathcal{SR}_c(\Lambda)\cap  \mathcal{SR}_c(\Lambda){\bf i} $. Then there exists $g\in  \mathcal{SR}_c(\Lambda)$, such that   $f=g{\bf i}$. From Proposition \ref{pro18}, there exist $h_1,h_2\in Hol_c(\Lambda_{\bf i})$ such that
$f=P_{\bf i}[h_1]$, and $g=P_{\bf i}[h_2]$. Then $h_1 =h_2{\bf i}$, and for any $q\in \Lambda_{\bf i}$ one has
$$h_2(q){\bf i}=h_1(q)=\overline{h_1(\bar q)} = \overline{h_2(\bar q){\bf i}} =-  h_2(q){\bf i} , $$
then $h_2=h_1=0$, and $\mathcal{SR}_c(\Lambda)\cap  \mathcal{SR}_c(\Lambda){\bf i} =\{0\}$.

Similarly one can see that all the other intersections between
$\mathcal{SR}_c(\Lambda)$,  $\mathcal{SR}_c(\Lambda){\bf i}$, $\mathcal{SR}_c(\Lambda){\bf j}$, $\mathcal{SR}_c(\Lambda){\bf k}$, are $\{0\}$ and the statement follows.
\end{proof}
The Representation Formula shows that all the slice regular functions defined on axially symmetric s-domains are of the  form (see \cite{csTrends})
\begin{equation}\label{Hfunctions}
f(q)=f(x+I_qy)=\alpha(x,y)+I_q\beta(x,y)
\end{equation}
where  {$\alpha$, $\beta$ are $\mathbb{H}$-valued,} such that
\begin{equation}\label{e-o}
\alpha(x,-y)=\alpha(x,y),\\
\beta(x,-y)=-\beta(x,y)
\end{equation}
and they satisfying the Cauchy-Riemann system
\begin{equation}\label{CR}
\frac{\partial}{\partial x}\alpha - \frac{\partial}{\partial y}\beta =0, \ \ \ \ \ \ \ \ \
\frac{\partial}{\partial y}\alpha + \frac{\partial}{\partial x}\beta =0.
\end{equation}
However, one may consider functions of the form (\ref{Hfunctions}) satisfying (\ref{e-o}) and (\ref{CR})  defined on axially symmetric open sets $\Lambda$.
 {These functions correspond to the slice regular functions quaternion valued in the terminology of \cite{gp}, in which these functions are studied in a more general setting.}
\begin{defin}\label{accalamb}
We denoted by $\mathcal{H}(\Lambda)$ the set of  functions  of the form
(\ref{Hfunctions}) satisfying (\ref{e-o}) and (\ref{CR}) defined on axially symmetric open sets $\Lambda$.
\end{defin}

  When the axially symmetric set $\Lambda$ is, in particular, an s-domain then $\mathcal{H}(\Lambda)=\mathcal{SR}(\Lambda)$.
\begin{pro}
Let $f\in \mathcal H(\Lambda)$  then consider    $\alpha$, $\beta$ given in Definition \ref{accalamb}. Then $f $ satisfies the quaternionic C-property if and only if the functions $\alpha$ and $\beta$ are real valued.
\end{pro}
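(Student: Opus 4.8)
The quaternionic C-property for $f\in\mathcal H(\Lambda)$ reads $f=Z_{\mathbb H}\circ f\circ Z_{\mathbb H}$, i.e. $f(q)=\overline{f(\bar q)}$ for every $q\in\Lambda$. The strategy is to write both sides of this identity through the representation $f(x+I_qy)=\alpha(x,y)+I_q\beta(x,y)$, reduce the C-property to a single quaternionic identity that must hold for all imaginary units $I_q\in\mathbb S^2$ (here the axial symmetry of $\Lambda$ is essential: for a fixed pair $(x,y)$ with $y=\|\vec q\|$, the point $x+Iy$ lies in $\Lambda$ for every $I\in\mathbb S^2$), and then split that identity into scalar and vector parts to pin down $\alpha$ and $\beta$.

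First I would compute $f(\bar q)$. Writing $q=x+I_qy$ with $y=\|\vec q\|\ge 0$, one has $\bar q=x-I_qy$, whose imaginary unit is $-I_q$ and whose modulus of the vector part is again $y$; equivalently one uses the parity relations $\alpha(x,-y)=\alpha(x,y)$, $\beta(x,-y)=-\beta(x,y)$ from \eqref{e-o}. Either way $f(\bar q)=\alpha(x,y)-I_q\beta(x,y)$. Applying the quaternionic conjugation, and using $\overline{pq}=\bar q\,\bar p$ together with $\overline{I_q}=-I_q$ (since $I_q$ is a purely imaginary unit), gives $\overline{f(\bar q)}=\overline{\alpha(x,y)}-\overline{\beta(x,y)}\,\overline{I_q}=\overline{\alpha(x,y)}+\overline{\beta(x,y)}\,I_q$. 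Hence the C-property is equivalent to
$$
\alpha(x,y)+I_q\beta(x,y)=\overline{\alpha(x,y)}+\overline{\beta(x,y)}\,I_q,\qquad \forall\, I_q\in\mathbb S^2 .
$$
The crucial structural feature here is that conjugation has moved the imaginary unit from the left of $\beta$ to its right, so left- and right-multiplication by $I_q$ must be compared; this left/right asymmetry, caused by noncommutativity, is exactly the mechanism that will force the vector parts to vanish.

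To exploit the identity I would fix $(x,y)$ and abbreviate $\alpha=\alpha_0+\vec\alpha$, $\beta=\beta_0+\vec\beta$, $I=I_q$, using the product rule $uv=-\langle u,v\rangle+u\times v$ for pure quaternions. Expanding the left-hand side yields scalar part $\alpha_0-\langle I,\vec\beta\rangle$ and vector part $\vec\alpha+\beta_0 I+I\times\vec\beta$, while the right-hand side yields scalar part $\alpha_0+\langle I,\vec\beta\rangle$ and vector part $-\vec\alpha+\beta_0 I+I\times\vec\beta$. Equating scalar parts gives $\langle I,\vec\beta\rangle=0$ for all $I\in\mathbb S^2$, whence $\vec\beta=0$; equating vector parts gives $2\vec\alpha=0$, whence $\vec\alpha=0$. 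Thus $\alpha$ and $\beta$ are real valued, which is the forward implication. The converse is immediate: if $\alpha,\beta$ are real then $\overline\alpha=\alpha$, $\overline\beta=\beta$, and $I_q\beta=\beta I_q$ since a real scalar commutes with $I_q$, so the displayed identity holds and $f$ satisfies the C-property. I expect the only real obstacle to be the bookkeeping of signs and orders in the noncommutative product expansion; the quantification over all of $\mathbb S^2$, granted by axial symmetry, then does the rest with no further difficulty.
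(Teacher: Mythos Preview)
Your argument is correct and follows essentially the same route as the paper: both reduce the C-property to the identity $\alpha+I\beta=\overline{\alpha}+\overline{\beta}\,I$ for all $I\in\mathbb S^2$ and then extract the real-valuedness of $\alpha$ and $\beta$. The only organizational difference is that the paper first isolates $\alpha$ via the averaging formula $\alpha=\tfrac12\bigl(f(x+\mathbf{i}y)+f(x-\mathbf{i}y)\bigr)$ (which shows $\overline\alpha=\alpha$ for a single fixed $\mathbf{i}$) and then handles $\beta$ by varying $\mathbf{i}$, whereas you treat both at once through a scalar/vector split; the content is the same.
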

\begin{proof}
If the functions $\alpha$ and $\beta$ are real valued function, it is immediate that $f$ satisfies the C-property.
Now, suppose that $f$ satisfies the C-property. Since
$$\alpha(x,y)  = \frac{1 }{2}   \left[  f(x+{\bf i}  y)   +   f(x-{\bf i}  y)    \right] $$
applying the quaternionic conjugation one has that
 {$$\overline{\alpha(x,y)}  =  \frac{1 }{2}   \left[ \overline{ f(x+{\bf i}  y) }  +   \overline{f(x-{\bf i}  y) }   \right]    =  \frac{1 }{2}   \left[ \overline{ f(x-{\bf i}  y) }  +   \overline{f(x+{\bf i}  y) }   \right] = \alpha(x,y),$$}
 That means $\alpha$  is a real valued function.
\\
Then  the function $f(x+{\bf i} y) =\alpha(x,y) + {\bf i}  \beta(x,y) $ satisfies the C-property and $\alpha$ is a real valued function. Therefore  $- \overline{\beta(x,y)} \ {\bf i} = - {\bf i} \beta(x,y) $,  or equivalently
$ 0=  \overline {\beta(x,y)} {\bf i} + \bar{\bf i} \beta(x,y)=2 \langle {\bf i} , \beta(x,y) \rangle_{\mathbb R^4}$, as $\beta$ does not depend of the unit vector ${\bf i}$ the previous identity  is  for all ${\bf i} \in \mathbb S^2$. Then  $\beta$ is a real valued function.
\end{proof}
The refined Splitting Lemma and Proposition \ref{pro20} hold also in this setting. Denote by  $\mathcal{H}_c(\Lambda)\subset \mathcal{H}(\Lambda)$ the set of functions satisfying the C-property. We have:
\begin{pro}
 Let $\Lambda$ be an axially symmetric open set.
\begin{enumerate}
\item
Let $f\in\mathcal{H}(\Lambda)$ and ${\bf i}, {\bf j}\in\mathbb{S}^2$ with ${\bf i}\perp {\bf j}$. Then there exist four functions $h_\ell\in  Hol_{c}(\Lambda_{\bf i})$, $\ell=0,\ldots, 3$ such that
$$
Q_{\bf i}[f]=h_0+h_1{\bf i}  +  h_2 {\bf j}  +h_3{\bf i} {\bf j}.
$$
\item Let $\{1,{\bf i},{\bf j},{\bf i}{\bf j}\}$ be a basis of $\mathbb{H}$. Then:
$$\mathcal{H}(\Lambda)=\mathcal{H}_c(\Lambda)  \oplus \mathcal{H}_c(\Lambda){\bf i}   \oplus  \mathcal{H}_c(\Lambda)  {\bf j}\oplus \mathcal{H}_c(\Lambda) \bf{ij}.
$$
\end{enumerate}
\end{pro}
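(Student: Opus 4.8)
The plan is to run, essentially verbatim, the chain of arguments that produced Proposition~\ref{pro18}, the Refined Splitting Lemma and Proposition~\ref{pro20}, and to observe that the $s$-domain hypothesis is never really needed there: it was used only to identify $\mathcal{SR}(\Lambda)$ with the class of functions of the form (\ref{Hfunctions}) subject to (\ref{e-o}) and (\ref{CR}), and to guarantee that $Q_{\bf i}$ and $P_{\bf i}$ are mutually inverse. But $\mathcal{H}(\Lambda)$ is \emph{defined} by the Representation Formula, so the operators $Q_{\bf i}\colon\mathcal{H}(\Lambda)\to Hol(\Lambda_{\bf i})+Hol(\Lambda_{\bf i}){\bf j}$, $f\mapsto f|_{\Lambda_{\bf i}}$, and $P_{\bf i}$, given by the same formula as before, still satisfy $P_{\bf i}\circ Q_{\bf i}=\mathrm{id}_{\mathcal{H}(\Lambda)}$ and $Q_{\bf i}\circ P_{\bf i}=\mathrm{id}$; in particular $Q_{\bf i}$ is injective on $\mathcal{H}(\Lambda)$ (a function of the form (\ref{Hfunctions}) is recovered from its restriction to a single slice $\Lambda_{\bf i}$ via $\alpha$ and $\beta$), which is the only ``global'' input needed below. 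Note also that $\Lambda$ axially symmetric forces each $\Lambda_{\bf i}$ to be $Z$-invariant in $\mathbb{C}({\bf i})$.

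For part~(1): take $f\in\mathcal{H}(\Lambda)$. By (\ref{CR}) the restriction $Q_{\bf i}[f]$ is an $\mathbb{H}$-valued holomorphic function on $\Lambda_{\bf i}$, so, using only the algebraic decomposition $\mathbb{H}=\mathbb{C}({\bf i})\oplus\mathbb{C}({\bf i}){\bf j}$ (which needs no hypothesis on the domain), we may write $Q_{\bf i}[f]=f_1+f_2{\bf j}$ with $f_1,f_2\in Hol(\Lambda_{\bf i})$. Since $\Lambda_{\bf i}$ is $Z$-invariant, formula (\ref{hol-direct-sum}) of Corollary~\ref{pro0} applies to $f_1$ and to $f_2$, yielding $h_0,h_1,h_2,h_3\in Hol_c(\Lambda_{\bf i})$ with $f_1=h_0+h_1{\bf i}$ and $f_2=h_2+h_3{\bf i}$ (recall $\mathbb{C}({\bf i})$ is commutative, so the side on which ${\bf i}$ is placed is irrelevant here). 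Substituting gives $Q_{\bf i}[f]=h_0+h_1{\bf i}+h_2{\bf j}+h_3{\bf i}{\bf j}$, which is~(1).

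For part~(2) I would first record the analogue of Proposition~\ref{pro18}, namely $\mathcal{H}_c(\Lambda)=P_{\bf i}[Hol_c(\Lambda_{\bf i})]$. If $h\in Hol_c(\Lambda_{\bf i})$, then the computation of Proposition~\ref{pro1}(1), which only uses the Representation Formula and the C-property, gives $P_{\bf i}[h](x+I_qy)=Re\,h(x+{\bf i}y)+I_q\,Im\,h(x+{\bf i}y)$, an element of $\mathcal{H}(\Lambda)$ whose functions $\alpha$ and $\beta$ are the real-valued $Re\,h$ and $Im\,h$; by the proposition characterising the quaternionic C-property through the reality of $\alpha$ and $\beta$, $P_{\bf i}[h]\in\mathcal{H}_c(\Lambda)$. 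Conversely, if $g\in\mathcal{H}_c(\Lambda)$ its $\alpha,\beta$ are real, so $Q_{\bf i}[g]=\alpha+{\bf i}\beta$ is $\mathbb{C}({\bf i})$-valued, holomorphic and has the C-property, i.e.\ $Q_{\bf i}[g]\in Hol_c(\Lambda_{\bf i})$ and $g=P_{\bf i}[Q_{\bf i}[g]]$. Now apply the (additive, right $\mathbb{H}$-linear) operator $P_{\bf i}$ to the decomposition in~(1): since $P_{\bf i}$ inverts $Q_{\bf i}$ we get $f=P_{\bf i}[h_0]+P_{\bf i}[h_1]{\bf i}+P_{\bf i}[h_2]{\bf j}+P_{\bf i}[h_3]{\bf i}{\bf j}$ with all $P_{\bf i}[h_\ell]\in\mathcal{H}_c(\Lambda)$, so the four subspaces span $\mathcal{H}(\Lambda)$. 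For directness, suppose $g_0+g_1{\bf i}+g_2{\bf j}+g_3{\bf i}{\bf j}=0$ with $g_\ell\in\mathcal{H}_c(\Lambda)$; applying $Q_{\bf i}$ and setting $k_\ell:=Q_{\bf i}[g_\ell]\in Hol_c(\Lambda_{\bf i})$, uniqueness of the splitting $(k_0+k_1{\bf i})+(k_2+k_3{\bf i}){\bf j}=0$ forces $k_0+k_1{\bf i}=0$ and $k_2+k_3{\bf i}=0$, and then uniqueness in (\ref{hol-direct-sum}) gives $k_\ell=0$ for all $\ell$; injectivity of $Q_{\bf i}$ on $\mathcal{H}(\Lambda)$ then gives $g_\ell=0$.

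The routine part is everything after the reduction to the holomorphic picture; the one point that genuinely has to be checked, and the only place where passing from an $s$-domain to a general axially symmetric $\Lambda$ matters, is that $Q_{\bf i}$ and $P_{\bf i}$ still form a mutually inverse pair on $\mathcal{H}(\Lambda)$, equivalently that a function of the form (\ref{Hfunctions})--(\ref{e-o})--(\ref{CR}) is determined by its restriction to a single slice $\Lambda_{\bf i}$. Granting that, the argument is word for word the $s$-domain one.
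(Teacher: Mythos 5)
Your proof is correct, and it arrives at the same decomposition as the paper's, but by a recognizably different route. The paper's proof works directly with the data $\alpha,\beta$ of (\ref{Hfunctions}): it expands $\alpha=\sum_\ell\alpha_\ell e_\ell$, $\beta=\sum_\ell\beta_\ell e_\ell$ into real components, reads off from (\ref{CR}) that each $h_\ell=\alpha_\ell+{\bf i}\beta_\ell$ is holomorphic and from (\ref{e-o}) that it lies in $Hol_c(\Lambda_{\bf i})$, and for part (2) simply exhibits the explicit extensions $\tilde h_\ell(x+I_qy)=\alpha_\ell(x,y)+I_q\beta_\ell(x,y)$, referring back to Proposition \ref{pro20} for directness. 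You instead transport everything to the slice via the pair $(Q_{\bf i},P_{\bf i})$, reuse the Splitting Lemma together with Corollary \ref{pro0}, and prove the $\mathcal H$-analogue of Proposition \ref{pro18} as an intermediate step. You correctly isolate the one point where the passage from $s$-domains to general axially symmetric open sets could fail, namely that $P_{\bf i}\circ Q_{\bf i}=\mathrm{id}$ on $\mathcal H(\Lambda)$, and you correctly note that this follows from the parity conditions (\ref{e-o}) alone; your $P_{\bf i}[h_\ell]$ coincide with the paper's $\tilde h_\ell$, so the two constructions agree. What your route buys: the directness argument via injectivity of $Q_{\bf i}$, uniqueness of the splitting $\mathbb{H}=\mathbb{C}({\bf i})\oplus\mathbb{C}({\bf i}){\bf j}$ and uniqueness in (\ref{hol-direct-sum}) is actually more complete than the paper's, which (following Proposition \ref{pro20}) only checks that pairwise intersections vanish — a condition that in general does not suffice for a four-fold direct sum. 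What the paper's route buys: it never invokes $P_{\bf i}$ or the operator identities outside the $s$-domain setting, so it is shorter and entirely self-contained at the level of the components $\alpha_\ell,\beta_\ell$.
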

\begin{proof}
Part 1 is immediate if one writes $f(x+{\bf i}y)=\alpha(x,y)+{\bf i}\beta(x,y)$ with $\alpha(x,y)=\alpha_0(x,y)+\alpha_1(x,y) {\bf i} +\alpha_2(x,y) {\bf j} +\alpha_3(x,y){\bf ij} $, $\beta(x,y)=\beta_0(x,y)+\beta_1(x,y) {\bf i} +\beta_2(x,y) {\bf j} +\beta_3(x,y){\bf ij}$. The Cauchy-Riemann equations implies that the functions $h_\ell=\alpha_\ell + {\bf i} \beta_\ell (x,y)$ are holomorphic (this fact has been discussed also \cite{css}); the conditions (\ref{e-o}) imply that $h_\ell\in Hol_{c}(\Lambda_{\bf i})$ for all $\ell=0,\ldots, 3$.\\
To prove the second statement, we use the previous step to write the restriction to $\mathbb{C}({\bf i})$ of  a function  $f\in\mathcal{H}(\Lambda)$ as
 $f(x+{\bf i}y)=h_0+h_1{\bf i}  +  h_2 {\bf j}  +h_3{\bf i} {\bf j}$. If we now set $\tilde h_\ell(x+I_qy):=\alpha_\ell(x,y)+I_q\beta_\ell (x,y)$ we have that $\tilde h_\ell\in\mathcal{H}_c(\Lambda)$. The fact that the sum is a direct sum can be obtained as in the proof of Proposition \ref{pro20}.
\end{proof}
\section{The slice regular Bergman space and the C-property}

We now apply the results of the previous sections to deduce some properties of the regular Bergman spaces.

\begin{defin} Let  $\Lambda\subset\mathbb H$ be  a bounded axially symmetric slice domain. We denote  by $\mathcal A^2(\Lambda_{\bf i})$   the holomorphic Bergman space associated to $\Lambda_{\bf i}$, and by $\mathcal A(\Lambda)$ the slice regular Bergman space  associated to $\Lambda$ i.e.
$$
\mathcal A(\Lambda)=\{f\in\mathcal{SR}(\Lambda)\ | \ \int_{\Lambda}|f|^2 d\mu < \infty\}.
$$
The set $\mathcal A(\Lambda)$ is equipped with the norm inherited from the $\mathcal L_2$-space.

By $\mathcal A_c(\Lambda) $ (resp. $\mathcal A_{\bar c}(\Lambda)$) we denote the real  {linear} subspace of $\mathcal A(\Lambda)$ whose elements  {satisfy} the quaternionic C-property   (resp.  the quaternionic anti-C-property).
\\
Finally, we consider the slice Bergman spaces
  $$
 \mathcal A(\Lambda_{{\bf i}}):=\{  f\in \mathcal{SR}(\Lambda) \   \mid  \
    \displaystyle \|f\|^2_{\mathcal A(\Lambda_{\bf i})}:= \int_{\Lambda_{\bf i}}|f_{\mid_{\Lambda_{{\bf i}}}}|^2 d\sigma_{\bf i} <\infty \}.
  $$
where $d \sigma_{\bf i}$ denotes the area element in the complex plane $\Lambda_{{\bf i}}$.
On $\mathcal A(\Lambda_{{\bf i}})$ we define the scalar product
$$
\langle f,g\rangle_{\mathcal A(\Lambda_{{\bf i}})}=\int_{\Lambda_{{\bf i}}} \overline{f}\,d\sigma_{{\bf i}}\, g.
$$
  \end{defin}
\begin{obs}{\rm
Proposition 3.2 in \cite{CGESS}  implies that  the map $P_{\bf i}$ restricted to $\mathcal A^2(\Lambda_{\bf i})$ gives the embedding $$P_{\bf i}\mid_{\mathcal A^2(\Lambda_{\bf i})}: \mathcal A^2(\Lambda_{\bf i}) \to \mathcal A(\Lambda).$$
}
\end{obs}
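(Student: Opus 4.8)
The plan is to deduce the statement from Proposition~3.2 of \cite{CGESS} together with the elementary fact, recalled in the previous section, that $P_{\bf i}$ is a left inverse of $Q_{\bf i}$. Since $\mathcal A^2(\Lambda_{\bf i})$ is contained in the domain $Hol(\Lambda_{\bf i})+Hol(\Lambda_{\bf i}){\bf j}$ of $P_{\bf i}$, the operator applies to every $f\in\mathcal A^2(\Lambda_{\bf i})$ and produces $P_{\bf i}[f]\in\mathcal{SR}(\Lambda)$; this already yields a well-defined linear map $P_{\bf i}\mid_{\mathcal A^2(\Lambda_{\bf i})}$ with values in $\mathcal{SR}(\Lambda)$, so the only points left to verify are that its image actually lies in the smaller space $\mathcal A(\Lambda)$, that it is continuous, and that it is injective.

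For the first two points I would estimate $\int_{\Lambda}|P_{\bf i}[f]|^2\,d\mu$. Using that $\Lambda$ is axially symmetric, I would write the four-dimensional volume integral in polar-slice coordinates $q=x+Iy$, with $I\in\mathbb S^2$, $x\in\mathbb R$, $y\ge 0$ and $(x,y)$ ranging over the upper half of $\Lambda_{\bf i}$; this introduces the weight $y^2$, up to a constant coming from the area of $\mathbb S^2$. The Representation Formula gives
$$P_{\bf i}[f](x+Iy)=\frac{1}{2}\left[(1+I{\bf i})f(x-y{\bf i})+(1-I{\bf i})f(x+y{\bf i})\right],$$
and since $|1\pm I{\bf i}|\le 2$ one gets $|P_{\bf i}[f](x+Iy)|^2\le 2\left(|f(x-y{\bf i})|^2+|f(x+y{\bf i})|^2\right)$ uniformly in $I$. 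Integrating first over $\mathbb S^2$, which contributes a finite factor, then over the half-slice, and using that $y^2$ is bounded on $\Lambda$ because $\Lambda$ is bounded, together with the $Z$-invariance of $\Lambda_{\bf i}$, I obtain a bound of the form $\|P_{\bf i}[f]\|_{\mathcal A(\Lambda)}\le C_{\Lambda}\,\|f\|_{\mathcal A^2(\Lambda_{\bf i})}$. This is precisely the content of Proposition~3.2 of \cite{CGESS}, which one may simply cite. For injectivity it suffices to invoke $Q_{\bf i}\circ P_{\bf i}=I$ on $Hol(\Lambda_{\bf i})+Hol(\Lambda_{\bf i}){\bf j}$: if $P_{\bf i}[f]=0$ then $f=Q_{\bf i}[P_{\bf i}[f]]=0$. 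Hence $P_{\bf i}\mid_{\mathcal A^2(\Lambda_{\bf i})}$ is a bounded injective linear map into $\mathcal A(\Lambda)$, i.e., an embedding.

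The main obstacle is the norm comparison in the second step. The delicate point is that a generic $f\in\mathcal A^2(\Lambda_{\bf i})$ need not satisfy the C-property, so $f(x-y{\bf i})$ is unrelated to $\overline{f(x+y{\bf i})}$ and the two boundary values of $f$ entering $P_{\bf i}[f]$ must be controlled separately; boundedness of $\Lambda$ is exactly what makes the weight $y^2$ harmless in the resulting estimate, and on an unbounded $\Lambda$ the inclusion into $\mathcal A(\Lambda)$ may fail. Since this estimate is already carried out in \cite{CGESS}, in the present paper it is enough to refer to that reference.
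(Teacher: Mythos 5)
Your proposal is correct and takes essentially the same route as the paper, which offers no proof of its own here: the remark simply defers the norm estimate $\|P_{\bf i}[f]\|_{\mathcal A(\Lambda)}\le C_\Lambda\|f\|_{\mathcal A^2(\Lambda_{\bf i})}$ to Proposition~3.2 of \cite{CGESS}, exactly as you do, and injectivity is the standard consequence of $Q_{\bf i}\circ P_{\bf i}=I$. Your reconstruction of the cited estimate (polar-slice decomposition of $d\mu$ with weight $y^2$, the pointwise bound $|P_{\bf i}[f](x+Iy)|^2\le 2\bigl(|f(x-y{\bf i})|^2+|f(x+y{\bf i})|^2\bigr)$, and the use of boundedness of $\Lambda$ and $Z$-invariance of $\Lambda_{\bf i}$) is accurate and correctly identifies where the hypotheses enter.
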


\begin{pro}
Let $\Lambda$ be an axially symmetric open set in $\mathbb{H}$. Given $f,g \in\mathcal A_c(\Lambda ) $, then
$$
\overline{\int_{\Lambda_{\bf i}} \overline{ f(\zeta)} d\sigma_{\zeta} g(\zeta) }=\int_{\Lambda_{\bf i}} \overline{ f(\zeta)} d\sigma_{\zeta} g(\zeta)\in\mathbb{R}.
$$
\end{pro}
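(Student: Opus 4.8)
The plan is to transfer the statement to the holomorphic C-property situation of Section~2 via Proposition~\ref{pro18}, and then either invoke Proposition~\ref{pro3}(3) directly or run a short change-of-variables argument on the slice $\Lambda_{\bf i}$.

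First I would observe that $f,g\in\mathcal A_c(\Lambda)\subseteq\mathcal{SR}_c(\Lambda)$, so by Proposition~\ref{pro18} there are $h_f,h_g\in Hol_c(\Lambda_{\bf i})$ with $Q_{\bf i}[f]=h_f$ and $Q_{\bf i}[g]=h_g$. In particular the restrictions $f_{\mid_{\Lambda_{\bf i}}}$, $g_{\mid_{\Lambda_{\bf i}}}$ are $\mathbb C({\bf i})$-valued and, since $\Lambda_{\bf i}$ is $Z$-invariant, satisfy the complex C-property $f(\bar\zeta)=\overline{f(\zeta)}$ and $g(\bar\zeta)=\overline{g(\zeta)}$ for $\zeta\in\Lambda_{\bf i}$. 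Moreover, because $f,g\in\mathcal A(\Lambda)$, their restrictions lie in $\mathcal A^2(\Lambda_{\bf i})$ (this is precisely the integrability that makes $\int_{\Lambda_{\bf i}}\overline f\,d\sigma\,g$ meaningful, and it follows from the comparison between the norm of $\mathcal A(\Lambda)$ and the slice norms established in \cite{CGESS}); hence $h_f,h_g\in\mathcal A^2_c(\Lambda_{\bf i})$ and
$$
\int_{\Lambda_{\bf i}}\overline{f(\zeta)}\,d\sigma_\zeta\,g(\zeta)=\langle h_g,h_f\rangle_{\mathcal A^2(\Lambda_{\bf i})}.
$$
At this point the quickest route is to apply Proposition~\ref{pro3}(3) with $\Omega=\Lambda_{\bf i}$: for $h_f,h_g\in\mathcal A^2_c(\Lambda_{\bf i})$ the pairing $\langle h_g,h_f\rangle_{\mathcal A^2(\Lambda_{\bf i})}$ is real, which is exactly the assertion (the displayed equality $\overline{(\,\cdot\,)}=(\,\cdot\,)$ being equivalent to reality).

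Alternatively, and more self-containedly, I would argue directly. Set $J=\int_{\Lambda_{\bf i}}\overline{f(\zeta)}\,d\sigma_\zeta\,g(\zeta)$, which lies in $\mathbb C({\bf i})$ since the integrand is $\mathbb C({\bf i})$-valued; consequently the quaternionic conjugation passes under the integral sign, giving $\bar J=\int_{\Lambda_{\bf i}}f(\zeta)\,\overline{g(\zeta)}\,d\sigma_\zeta$. Performing the change of variables $\zeta\mapsto\bar\zeta$, which maps the $Z$-invariant set $\Lambda_{\bf i}$ onto itself and preserves the area element $d\sigma$, yields $\bar J=\int_{\Lambda_{\bf i}}f(\bar\zeta)\,\overline{g(\bar\zeta)}\,d\sigma_\zeta$. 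Using the C-property on $\Lambda_{\bf i}$, namely $f(\bar\zeta)=\overline{f(\zeta)}$ and $\overline{g(\bar\zeta)}=\overline{\overline{g(\zeta)}}=g(\zeta)$, we get $\bar J=\int_{\Lambda_{\bf i}}\overline{f(\zeta)}\,g(\zeta)\,d\sigma_\zeta=J$; an element of $\mathbb C({\bf i})$ equal to its own conjugate is real, so $J\in\mathbb R$.

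The only genuinely delicate point is ensuring that the restriction of an $\mathcal A(\Lambda)$-function to the slice $\Lambda_{\bf i}$ is indeed square-integrable there, so that every integral appearing is well defined; this is exactly what the statement presupposes and is supplied by the norm comparison of \cite{CGESS}. Everything else is routine bookkeeping with the C-property and a measure-preserving change of variables.
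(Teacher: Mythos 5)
Your proposal is correct and follows essentially the same route as the paper: the paper's (very terse) proof likewise rests on the restrictions to $\Lambda_{\bf i}$ being $\mathbb{C}({\bf i})$-valued with the C-property, the commutativity of their values (Corollary \ref{comm}), and the fact that conjugation preserves the area element on the $Z$-invariant slice, which is exactly your change-of-variables computation. Your alternative reduction via Proposition \ref{pro18} to Proposition \ref{pro3}(3), and your explicit remark on the square-integrability of the restriction, are just more careful versions of the same argument.
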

\begin{proof}
It is a direct consequence of the C-property:  the values of $f,g$ commute by Corollary \ref{comm}  and the conjugation on the slice $\Lambda_{\bf i}$  preserves the differential  element of area. Thus, the inner product of two any elements of $\mathcal A_c(\Lambda ) $  has real values.
\end{proof}

\begin{pro}
Let $\Lambda\subseteq\mathbb{H}$ be an axially symmetric open set.
The space $\mathcal A_c(\Lambda ) $ is a real linear Hilbert space.
\end{pro}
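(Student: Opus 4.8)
The plan is to mirror the one‑variable argument of Proposition~\ref{pro2}, replacing the complex conjugation $Z_{\mathbb C}$ by the quaternionic one $Z_{\mathbb H}$ and the classical Bergman space by $\mathcal A(\Lambda)$. Two things have to be checked: that the norm inherited from $\mathcal A(\Lambda)$ makes $\mathcal A_c(\Lambda)$ a real inner product space, and that $\mathcal A_c(\Lambda)$ is complete.

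For the first point I would observe that $\mathcal A_c(\Lambda)$ is a real linear subspace of $\mathcal A(\Lambda)$ and that, reasoning as in the proposition immediately preceding this statement — now with the volume element $d\mu$ on $\Lambda$ in place of the area element on $\Lambda_{\bf i}$, using Corollary~\ref{comm} to guarantee that $f(q)$ and $g(q)$ lie in the common slice $\mathbb C(I_q)$ and using the invariance of $d\mu$ under $Z_{\mathbb H}$ — the quaternionic inner product $\langle f,g\rangle_{\mathcal A(\Lambda)}=\int_\Lambda\overline f\,d\mu\,g$ takes real values for all $f,g\in\mathcal A_c(\Lambda)$. Hence $\mathcal A_c(\Lambda)$ carries a natural structure of real pre‑Hilbert space.

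It then remains to prove that $\mathcal A_c(\Lambda)$ is closed in $\mathcal A(\Lambda)$. Here I would invoke from \cite{CGESS} that $\mathcal A(\Lambda)$ is itself complete and that the point evaluations $f\mapsto f(q)$ are bounded on it — equivalently, that convergence in the norm of $\mathcal A(\Lambda)$ forces uniform convergence on the compact subsets of $\Lambda$. Given a Cauchy sequence $\{f_n\}\subset\mathcal A_c(\Lambda)$, let $f\in\mathcal A(\Lambda)$ be its limit in norm; since $\Lambda$ is axially symmetric, $\bar q\in\Lambda$ whenever $q\in\Lambda$, and
$$
Z_{\mathbb H}\circ f\circ Z_{\mathbb H}(q)=\overline{f(\bar q)}=\lim_{n\to\infty}\overline{f_n(\bar q)}=\lim_{n\to\infty}f_n(q)=f(q),\qquad\forall q\in\Lambda,
$$
where the third equality combines the continuity of $Z_{\mathbb H}$ with the pointwise convergence $f_n(\bar q)\to f(\bar q)$, and the fourth uses that each $f_n$ satisfies the quaternionic C‑property. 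Thus $f\in\mathcal A_c(\Lambda)$, so $\mathcal A_c(\Lambda)$ is closed, hence complete, and therefore a real Hilbert space.

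The only step that is not purely formal is the passage from norm convergence to pointwise convergence: this is exactly what lets the C‑property survive the limiting process, and it is available because the boundedness of the evaluation functionals on $\mathcal A(\Lambda)$ — i.e. the existence of the Bergman kernel of the first kind — has already been established in \cite{CGESS}. Everything else reproduces the scalar model of Proposition~\ref{pro2}.
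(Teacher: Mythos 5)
Your argument is correct and follows essentially the same route as the paper: the paper's proof is a two-line reduction to Proposition~\ref{pro2} resting on the completeness of $\mathcal A(\Lambda)$ and the fact (from \cite{CGESS}) that norm convergence implies uniform convergence on compact sets, which is exactly the mechanism you spell out to show that the quaternionic C-property passes to the limit. Your additional verification that the inner product is real-valued on $\mathcal A_c(\Lambda)$ duplicates the proposition immediately preceding this one in the paper, so nothing is missing.
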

\begin{proof}
It is similar to the complex case, see Proposition \ref{pro2}, and it is a direct consequence of the following facts, presented in \cite{CGESS}:
\begin{enumerate}
 \item  $\mathcal A(\Lambda ) $ is a complete normed space.
\item The convergence in the norm implies the uniform convergence on compact subsets of the slice $\Lambda_{\bf i}$.
\end{enumerate}
\end{proof}

Proposition \ref{pro20} implies the existence of  functions  $\mathcal K^n_{\Lambda}(\cdot, q)\in \mathcal A_c(\Lambda)$, for $n=0,1,2,3$, such that
$$
\mathcal K_{\Lambda}(\cdot, q) =\mathcal K_{\Lambda}^0(\cdot, q)  +\mathcal K_{\Lambda}^1(\cdot, q) {\bf i}+ \mathcal K_{\Lambda}^2(\cdot, q)  {\bf j} + \mathcal K_{\Lambda}^3(\cdot, q) {\bf ij},
$$
for any $q$ belonging to an axially symmetric s-domain $\Lambda$.
These functions are the components, which  {satisfy} the C-property, of the slice regular Bergman kernel.
\\

Let us now fix  the basis $\{e_0=1, e_1={\bf i} ,e_2={\bf j},e_3={\bf ij}\}$ of the algebra of quaternions.

\begin{pro}
Let $\Lambda$ be an axially symmetric s-domain and let $q\in\Lambda$.
Let any  $f = f_0 + f_1 e_1+ f_2e_2 +f_3e_3 \in \mathcal A_c(\Lambda) $, with $f_n$ real valued function for $n=0,1,2,3 $. Then
$$
f_n(q)=\int_{\Lambda_{\bf i}}  \mathcal K^n_{\Lambda}(q, \zeta) f(\zeta) d\sigma_\zeta,\quad \forall n=0,1,2,3.
$$
\end{pro}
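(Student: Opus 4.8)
The plan is to reduce the statement to the one-variable Theorem \ref{pro13} by restricting $f$ to a slice, and then to match the integrals obtained in this way with the ones in the statement, which are written against the C--property components $\mathcal K^n_\Lambda$ of the second-kind kernel.

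Since $f\in\mathcal A_c(\Lambda)\subset\mathcal{SR}_c(\Lambda)$, Proposition \ref{pro18} gives $h:=Q_{\bf i}[f]=f\mid_{\Lambda_{\bf i}}\in Hol_c(\Lambda_{\bf i})$, and by the norm comparison recalled from \cite{CGESS} $h$ inherits the square integrability of $f$, so $h\in\mathcal A_c^2(\Lambda_{\bf i})$; in particular $h$ is $\mathbb C({\bf i})$-valued, say $h=\phi+{\bf i}\psi$ with $\phi,\psi$ real valued. By the Representation Formula (Proposition \ref{pro1}, since $f=P_{\bf i}[h]$) one has $f(q)=\phi(x+{\bf i}y)+I_q\,\psi(x+{\bf i}y)$ for $q=x+I_qy$, so the real components of the quaternion $f(q)$ are $f_0(q)=\phi(x+{\bf i}y)$ and $f_k(q)=(I_q)_k\,\psi(x+{\bf i}y)$ for $k=1,2,3$, where $(I_q)_k\in\mathbb R$ is the $e_k$-coefficient of $I_q$. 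Applying Theorem \ref{pro13} on the $Z$-invariant set $\Lambda_{\bf i}$ to $h$, using $h(\zeta)=f(\zeta)$ on $\Lambda_{\bf i}$ and that real scalars may be moved inside the integral, I obtain
$$
f_0(q)=\int_{\Lambda_{\bf i}}\mathcal R_{\Lambda_{\bf i}}(x+{\bf i}y,\zeta)\,f(\zeta)\,d\sigma_\zeta,\qquad f_k(q)=\int_{\Lambda_{\bf i}}(I_q)_k\,\mathcal I_{\Lambda_{\bf i}}(x+{\bf i}y,\zeta)\,f(\zeta)\,d\sigma_\zeta\quad(k=1,2,3).
$$
This already yields the four reproducing identities, but written against kernels built from $\mathcal R_{\Lambda_{\bf i}},\mathcal I_{\Lambda_{\bf i}}$ rather than against $\mathcal K^n_\Lambda$.

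The remaining, and decisive, step is therefore to show $\int_{\Lambda_{\bf i}}\mathcal K^n_\Lambda(q,\zeta)f(\zeta)\,d\sigma_\zeta=f_n(q)$. I would prove this as in the last part of the proof of Theorem \ref{pro13}: insert the decomposition $\mathcal K_\Lambda(q,\zeta)=\sum_{n=0}^3\mathcal K^n_\Lambda(q,\zeta)e_n$ (Proposition \ref{pro20}, in the first variable) into the characterizing slice reproducing property $g(q)=\int_{\Lambda_{\bf i}}\mathcal K_\Lambda(q,\zeta)g(\zeta)\,d\sigma_\zeta$ of the second-kind kernel, take $g=f\in\mathcal A_c(\Lambda)$, and move each $e_n$ past $f(\zeta)$ using that $f\mid_{\Lambda_{\bf i}}$ is $\mathbb C({\bf i})$-valued (Proposition \ref{pro18}): $e_nf(\zeta)=f(\zeta)e_n$ when $e_n$ commutes with ${\bf i}$, and $e_nf(\zeta)=\overline{f(\zeta)}\,e_n=f(\bar\zeta)\,e_n$ when $e_n$ anti-commutes with ${\bf i}$ (here the C--property of $f$ is used). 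Together with the $Z$-invariance of $\Lambda_{\bf i}$ and the substitution $\zeta\mapsto\bar\zeta$ this rewrites the right-hand side as $\sum_n\big(\int_{\Lambda_{\bf i}}\mathcal K^n_\Lambda(q,\zeta)f(\zeta)\,d\sigma_\zeta\big)e_n$, once one has checked that each of these integrals is real valued --- a fact that follows, exactly as the reality of $\int_{\Lambda_{\bf i}}\mathcal R_{\Lambda_{\bf i}}(z,\zeta)f(\zeta)\,d\sigma_\zeta$ in Theorem \ref{pro13}, from the C--property of $\mathcal K^n_\Lambda$ in its second entry (which in turn can be read off the description of $\mathcal K_\Lambda$ in \cite{CGESS}, combined with Proposition \ref{pro1}). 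Comparing coefficients with $f(q)=\sum_n f_n(q)e_n$ then gives the claim; the first-step identities above serve as a consistency check. A variant avoiding the explicit shape of $\mathcal K_\Lambda$ is to produce the representing kernels directly from the Riesz representation theorem for the real Hilbert space $\mathcal A_c(\Lambda)$ applied to the bounded real-linear functionals $f\mapsto f_n(q)$, assemble them, show the assembled kernel reproduces all of $\mathcal A(\Lambda)$ on the slices --- reducing, via Proposition \ref{pro20} and the right $\mathbb H$-linearity of the slice integral, to $\mathcal A_c(\Lambda)$ --- and conclude by uniqueness of the second-kind kernel and uniqueness in Proposition \ref{pro20}.

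I expect the only real difficulty to be this non-commutative bookkeeping. The four scalar identities drop out of the slice reduction at once, but the $\mathcal K^n_\Lambda(q,\zeta)$ are genuinely quaternion valued and multiply $f(\zeta)$ on the left, so the four scalar formulas cannot simply be reassembled into the vector reproducing property; it is precisely the slice commutation relations $e_nf(\zeta)=f(\zeta)e_n$ or $\overline{f(\zeta)}\,e_n$ --- available because $f\mid_{\Lambda_{\bf i}}$ is $\mathbb C({\bf i})$-valued --- together with the $Z$-invariance of $\Lambda_{\bf i}$ and the reality of the integrals, exactly the tools used throughout Section 2, that make the argument close.
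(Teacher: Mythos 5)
Your first step (restricting $f$ to the slice and applying Theorem \ref{pro13}) is fine and does give four correct scalar identities, but written against $\mathcal R_{\Lambda_{\bf i}}$ and $\mathcal I_{\Lambda_{\bf i}}$. The trouble is the step you yourself call decisive. After inserting $\mathcal K_\Lambda(q,\zeta)=\sum_n\mathcal K^n_\Lambda(q,\zeta)e_n$ into the slice reproducing formula and commuting $e_n$ past $f(\zeta)$, you arrive at $f(q)=\sum_n A_n e_n$ with $A_n=\int_{\Lambda_{\bf i}}\mathcal K^n_\Lambda(q,\zeta)f(\zeta)\,d\sigma_\zeta$ (for $n=2,3$ only after the substitution $\zeta\mapsto\bar\zeta$, which must itself be related back to $A_n$). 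To compare this with $f(q)=\sum_n f_n(q)e_n$ you must know that each $A_n$ is \emph{real}; but for $q\notin\Lambda_{\bf i}$ the value $\mathcal K^n_\Lambda(q,\zeta)$ lies in $\mathbb{R}+\mathbb{R}I_q$ while $f(\zeta)\in\mathbb{C}({\bf i})$, so a priori $A_n$ is a general quaternion and no coefficient comparison is possible. The reality of $A_n$, and the C-property of $\mathcal K^n_\Lambda$ in its \emph{second} entry that you invoke to get it, are essentially the content of the proposition itself: the components $\mathcal K^n_\Lambda$ are defined only through the decomposition of $\mathcal K_\Lambda(\cdot,q)$ in its \emph{first} variable (Proposition \ref{pro20}), and nothing established before this point controls their behaviour in the second variable. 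Asserting that this "can be read off" from \cite{CGESS} plus Proposition \ref{pro1} is not a proof; as written, the main route is circular.

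The "variant" you mention in one sentence at the end is in fact the paper's actual proof: apply the Riesz representation theorem in the real Hilbert space $\mathcal A_c(\Lambda)$ to the bounded real-linear functionals $f\mapsto f_n(q)$, assemble the four representing functions into a kernel $K(q,\cdot)=\sum_n K^n(q,\cdot)e_n$, check that $K$ is hermitian and reproduces all of $\mathcal A(\Lambda)$ on the slice (using Proposition \ref{pro20} and right $\mathbb H$-linearity), and conclude $K=\mathcal K_\Lambda$, hence $K^n=\mathcal K^n_\Lambda$, by uniqueness of the second-kind kernel. That construction is what actually delivers the reality of the integrals; it should be your primary argument, not a fallback.
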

\begin{proof}
Fist of all, let us recall that for any fixed $\mathbf{i}\in\mathbb{S}^2$ and $q\in \Lambda_{\bf i}$, the evaluation functional
$$
\phi_q[f]=f(q),\quad \forall f\in \mathcal A(\Lambda)
$$
is bounded  on $\mathcal A(\Lambda)$, see \cite{CGESS}.
Then, by setting  $\theta_n( q)  =\theta_n(  x_0+ x_1e_1 +x_2e_2 +x_3e_3)= x_n $, $n=0,1,2,3$,
one has that also the real linear functionals
$\theta_n\circ  \phi_q$     are   bounded  on    $\mathcal A_c(\Lambda)$ for  $n=0,1,2,3$.
By the Riesz representation theorem in the  {real linear} space   $\mathcal A_c(\Lambda)$, there exist $L_q^n\in\mathcal A_c(\Lambda)$ such that
$$
f_n(q)=\int_{\Lambda_{\bf i}}     \overline{L_q^n}(r) d\sigma_r f(r) ,\quad \textrm{for} \quad  n=0,1,2,3,
$$
where $d\sigma$ denotes the scalar element of area.
Then we have
$$
 e_n f_n(q)= \int_{\Lambda_{\bf i}}    e_n \overline{L_q^n }(r) d\sigma_r f(r),\quad \textrm{for} \quad  n=0,1,2,3.
 $$
Denoting $K^n(q,\cdot) =\overline{L_q^n}(\cdot)$  one has  that
$$
e_n f_n(q)= f_n(q) e_n= \int_{\Lambda_{\bf i}}     K^n(q,r)e_n d\sigma_r f(r),\quad \textrm{for} \quad  n=0,1,2,3,
$$
and setting
$$\displaystyle K(q,\cdot) =\sum_{n=0}^3 K^n(q,\cdot)e_n,$$
 we obtain
$$
f(q)= \int_{\Lambda_{\bf i}}   K(q,r)  d\sigma_r f(r).
$$
Note that given $r\in \Lambda_{\bf}$,  then  $\displaystyle \overline{K(r,\cdot)} =\sum_{n=0}^3 \overline{e_n} L_{r}^n  \in \mathcal A_{c}(\Lambda)$, and
$$
\overline{K(r, q)} = \int_{\Lambda_{\bf i}}   K(q,\zeta)  d\sigma_{\zeta}  \overline{K(r,\zeta)}.
$$
Applying the quaternionic conjugation one has that
$$
K(r, q) = \int_{\Lambda_{\bf i}}  K(r,\cdot)   d\sigma  \overline{ K(q,\cdot)} = \overline{ K(q,r)} .
$$
That fact proves that $ K(\cdot ,\cdot)  $ is hermitian.
Proposition \ref{pro20}, implies that any element in  $\mathcal A(\Lambda )$ is a quaternionic right linear combination of the elements of $\mathcal A_c(\Lambda )$, then $K(\cdot ,\cdot) $ is a reproducing kernel of  $\mathcal A(\Lambda )$. The uniqueness  of slice regular  Bergman  kernel implies the result.
\end{proof}

\subsection{On the reproducing property of the slice regular Bergman kernel}
Let us recall, see \cite{CGESS}, that the slice Bergman kernel of the first kind  $\mathcal B(\cdot, \cdot)$ is slice regular in its first coordinate and it is right anti-slice regular in its second coordinate. We can introduce the operator which takes the restriction of an anti-slice regular function to a complex plane $\mathbb{C}({\bf i})$. With an abuse of notation, we will denote this restriction operator by same symbol $Q_{\bf i}$ used for slice regular functions.
\begin{defin}
Let $\Lambda$ be an axially symmetric s-domain.
By $\mathcal B_{\Lambda}(\cdot,\cdot)$ we  denote the regular Bergman kernel of the first kind associated to $\Lambda$, and by $\mathcal K_{\Lambda}(\cdot, \cdot)$, denote the slice regular Bergman kernel of the second kind associated to $\Lambda$, see \cite{CGESS}. \\
Let us define the operator
$M_{\bf i}$ on $\mathcal A(\Lambda)$ as follows: let $f\in \mathcal A(\Lambda)$, then
\begin{equation}\label{equaM}
\displaystyle M_{\bf i}[f](q):=\int_{\Lambda_{\bf i}}  Q_{\bf i}[\mathcal B(q, \zeta )] Q_{\bf i}[f](\zeta)d\sigma_\zeta  , \quad \forall q \in \Lambda,
\end{equation}
where $ Q_{\bf i}[\mathcal B(q, \zeta )]$ acts as the restriction to $\mathbb{C}({\bf i})$ of the second coordinate.
\end{defin}

The next result says that, using a suitable operator denoted by $M_{\bf i}$,   { the regular Bergman kernel  of the second kind  reproduces all elements of the slice regular Bergman space not only on the slices,
as shown in \cite{CGESS},}  but on the whole domain $\Lambda$.

\begin{teo} Let $\Lambda$ be an axially symmetric s-domain. Then we have:
  \begin{equation}\label{equation6}
  \displaystyle f(q)=\int_{\Lambda} \mathcal K_{\Lambda}(q,r)   M_{\bf i}[f](r) d\mu_r, \quad \forall f \in \mathcal A(\Lambda),
      \end{equation}
  where $d\mu_r$ is the  differential element of volume.
\end{teo}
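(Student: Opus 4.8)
The plan is to deduce identity (\ref{equation6}) from two reproducing properties already established in \cite{CGESS}: the one on the whole of $\Lambda$ for the Bergman kernel of the first kind, namely $g(q)=\int_{\Lambda}\mathcal B(q,r)\,g(r)\,d\mu_r$ for every $g\in\mathcal A(\Lambda)$ and $q\in\Lambda$, and the one on the slices for the Bergman kernel of the second kind, namely $f(q)=\int_{\Lambda_{\bf i}}\mathcal K_{\Lambda}(q,\zeta)\,Q_{\bf i}[f](\zeta)\,d\sigma_\zeta$ for every $f\in\mathcal A(\Lambda)$. I will also use that $\mathcal B$ is hermitian, $\overline{\mathcal B(q,r)}=\mathcal B(r,q)$, that $\mathcal B(q,\cdot)$ and $\mathcal K_{\Lambda}(q,\cdot)$ are right anti-slice regular, and that the evaluation functionals are bounded on $\mathcal A(\Lambda)$ and on $\mathcal A^2(\Lambda_{\bf i})$, so that all the kernels appearing below are square integrable and $M_{\bf i}[f]\in\mathcal A(\Lambda)$.

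The heart of the argument is the identity
$$
\int_{\Lambda}\mathcal K_{\Lambda}(q,r)\,\mathcal B(r,\zeta)\,d\mu_r=\mathcal K_{\Lambda}(q,\zeta),\qquad q\in\Lambda,\ \ \zeta\in\Lambda_{\bf i},
$$
which is the quaternionic analogue of the self-reproducing property $\mathcal K\ast\mathcal K=\mathcal K$ of the classical Bergman kernel. To obtain it, fix $q\in\Lambda$: since $r\mapsto\mathcal K_{\Lambda}(q,r)$ is right anti-slice regular and square integrable on $\Lambda$, the function $r\mapsto\overline{\mathcal K_{\Lambda}(q,r)}$ is (left) slice regular and lies in $\mathcal A(\Lambda)$. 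Applying to it the reproducing property of the kernel of the first kind at the point $\zeta$, then taking the quaternionic conjugate of both sides (conjugation commutes with integration against the scalar measure $d\mu$) and using $\overline{\mathcal B(\zeta,r)}=\mathcal B(r,\zeta)$, one gets exactly the displayed identity.

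With this in hand, I would start from the right-hand side of (\ref{equation6}), replace $M_{\bf i}[f]$ by its definition (\ref{equaM}), and interchange the two integrations --- legitimate because, by Cauchy--Schwarz and the square integrability just recalled, the double integral is absolutely convergent --- to obtain
$$
\int_{\Lambda}\mathcal K_{\Lambda}(q,r)\,M_{\bf i}[f](r)\,d\mu_r=\int_{\Lambda_{\bf i}}\left(\int_{\Lambda}\mathcal K_{\Lambda}(q,r)\,\mathcal B(r,\zeta)\,d\mu_r\right)Q_{\bf i}[f](\zeta)\,d\sigma_\zeta .
$$
The key identity collapses the inner integral to $\mathcal K_{\Lambda}(q,\zeta)$, and what is left, $\int_{\Lambda_{\bf i}}\mathcal K_{\Lambda}(q,\zeta)\,Q_{\bf i}[f](\zeta)\,d\sigma_\zeta$, equals $f(q)$ by the reproducing property of $\mathcal K_{\Lambda}$ on the slices. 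The main obstacles are the careful tracking of the order of the noncommutative factors throughout (conjugation reverses products) and, above all, the analytic justification of Fubini's theorem together with the square integrability of $\mathcal K_{\Lambda}(q,\cdot)$ on $\Lambda$ with respect to $d\mu$ (and the well-posedness of $M_{\bf i}[f]$ itself); these are precisely the technical inputs to be imported from \cite{CGESS}, the remaining steps being purely formal.
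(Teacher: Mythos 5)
Your proposal is correct and matches the paper's own argument in substance: both rest on the slice reproducing property of $\mathcal K_{\Lambda}$, the reproducing property of the first-kind kernel $\mathcal B$ applied to $\mathcal K_{\Lambda}(\cdot,q)\in\mathcal A(\Lambda)$ (your "key identity" is just the quaternionic conjugate of the paper's equation $\mathcal K_{\Lambda}(\zeta,q)=\int_{\Lambda}\mathcal B_{\Lambda}(\zeta,r)\mathcal K_{\Lambda}(r,q)\,d\mu_r$, via hermitianity), and an interchange of the two integrations. The only differences are cosmetic --- you run the computation from the right-hand side toward $f(q)$ rather than the reverse, and you are more explicit about the Fubini justification, which the paper leaves implicit.
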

\begin{proof}
As we proved in \cite{CGESS}, for any $f\in \mathcal A(\Lambda)$ one has
\begin{equation}\label{equation8}
f(q)=\int_{\Lambda_{\bf i}}{\mathcal K_{\Lambda} (q, \zeta)} Q_{\bf i}[f](\zeta) d\sigma_\zeta=\int_{\Lambda_{\bf i}} \overline{\mathcal K_{\Lambda} (\zeta, q)} Q_{\bf i}[f](\zeta) d\sigma_\zeta,
\end{equation}
the properties of the Bergman kernel of the first kind one has
\begin{equation}
  \label{equation5}
  \mathcal K_{\Lambda}(\zeta, q) = \int_{\Lambda}  \mathcal B_{\Lambda}(\zeta ,r) \mathcal K_{\Lambda} (r, q) d\mu_{r},
\end{equation}
where $q,z\in \Lambda $.
By substituting  (\ref{equation5}) in (\ref{equation8}) we obtain
$$
f(q)=\int_{\Lambda_{\bf i}}  \int_{\Lambda}  \mathcal K_{\Lambda} (q,r ) \mathcal B_{\Lambda}(r, \zeta)  Q_{\bf i}[f](\zeta) d\sigma_\zeta d\mu_r
$$

$$
  =\int_{\Lambda} \mathcal K_{\Lambda}(q,r)  \int_{\Lambda_{\bf i}}  Q_{\bf i}[\mathcal{B}(r,\zeta)] Q_{\bf i}[f](\zeta) d\sigma_\zeta d\mu_r, \quad \forall f \in \mathcal A(\Lambda)
$$
 and recalling (\ref{equaM}) we get the result. \end{proof}

 {The operator $M_{\bf i}$ has the following properties with respect to the inner product of the slice regular Bergman space:}
\begin{pro}
 Let $\Lambda$ be an axially symmetric s-domain. Then:
  \begin{enumerate}
    \item $\displaystyle \int_{\Lambda} \overline{M_{\bf i}[f]}gd\mu =\int_{\Lambda_{\bf i}}\overline{  Q_{\bf i}[f]} Q_{\bf i} [g] d\sigma, \quad  \forall f, g\in \mathcal A(\Lambda)$,
  \item $\displaystyle\int_{\Lambda} \overline{M_{\bf i}[g] }g d\mu  =\int_{\Lambda_{\bf i}} |Q_{\bf i}[g] |^2 d\sigma ,\quad \forall g\in \mathcal A(\Lambda)$,
     \end{enumerate}
where $d\mu$ is the  differential element of volume and $d\sigma$  is the differential element of area.
\end{pro}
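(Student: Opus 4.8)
The plan is to prove both identities directly from the definition (\ref{equaM}) of $M_{\bf i}$, using the reproducing property of the Bergman kernel of the first kind $\mathcal B_\Lambda$ on $\mathcal A(\Lambda)$ together with the fact that $\mathcal B_\Lambda(\cdot,\cdot)$ is slice regular in the first variable and right anti-slice regular in the second variable; the second statement is then just the specialization $f=g$ of the first, so the real content is item 1.

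First I would unwind the left-hand side of item 1. Writing $M_{\bf i}[f](r)=\int_{\Lambda_{\bf i}} Q_{\bf i}[\mathcal B(r,\zeta)]\,Q_{\bf i}[f](\zeta)\,d\sigma_\zeta$ and inserting this into $\int_\Lambda \overline{M_{\bf i}[f]}\,g\,d\mu$, one gets, after conjugation,
\[
\int_\Lambda \overline{M_{\bf i}[f](r)}\,g(r)\,d\mu_r
=\int_{\Lambda_{\bf i}} \overline{Q_{\bf i}[f](\zeta)}\left(\int_\Lambda \overline{Q_{\bf i}[\mathcal B(r,\zeta)]}\,g(r)\,d\mu_r\right) d\sigma_\zeta,
\]
where I have used Fubini to exchange the order of integration (justified by the fact that $\Lambda$ is bounded, $f,g\in\mathcal A(\Lambda)\subset\mathcal L^2$, and the Bergman kernel of the first kind is square-integrable in the relevant variables, exactly as in \cite{CGESS}). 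Now the inner integral is, by the defining reproducing property of $\mathcal B_\Lambda$ — namely $g(\zeta)=\int_\Lambda \overline{\mathcal B_\Lambda(r,\zeta)}\,g(r)\,d\mu_r$ read on the slice, or equivalently the Hermitian symmetry $\overline{\mathcal B_\Lambda(r,\zeta)}=\mathcal B_\Lambda(\zeta,r)$ combined with reproduction — equal to $g(\zeta)$ evaluated at $\zeta\in\Lambda_{\bf i}$, i.e. to $Q_{\bf i}[g](\zeta)$. Substituting back gives $\int_{\Lambda_{\bf i}}\overline{Q_{\bf i}[f]}\,Q_{\bf i}[g]\,d\sigma$, which is item 1. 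Item 2 follows by taking $f=g$ and observing $\overline{Q_{\bf i}[g]}\,Q_{\bf i}[g]=|Q_{\bf i}[g]|^2$.

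The main obstacle, and the step deserving the most care, is the correct bookkeeping of the noncommutative quaternionic multiplication and of which variable of $\mathcal B_\Lambda$ plays the "holomorphic" and which the "anti-holomorphic" role: the reproducing formula for $\mathcal B_\Lambda$ must be applied with $\mathcal B_\Lambda$ acting through its anti-slice regular (second) slot on the test function $g$, and one must check that restricting to the plane $\mathbb C({\bf i})$ via $Q_{\bf i}$ commutes with this reproduction — this is precisely the content encoded in (\ref{equation5}) and (\ref{equation8}) in the proof of the previous theorem, so I would invoke those identities rather than re-derive them. Once the order of factors is pinned down consistently with the conventions of \cite{CGESS}, the rest is a routine application of Fubini's theorem and the Hermitian symmetry of $\mathcal B_\Lambda$.
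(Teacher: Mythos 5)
Your proposal is correct and follows essentially the same route as the paper's own proof: expand $M_{\bf i}[f]$ from its definition, conjugate, exchange the order of integration, and use the Hermitian symmetry together with the reproducing property of $\mathcal B_{\Lambda}$ to collapse the inner integral to $Q_{\bf i}[g](\zeta)$, with item 2 obtained by setting $f=g$. The only difference is that you make the Fubini justification and the slot-bookkeeping for $\mathcal B_{\Lambda}$ explicit, which the paper leaves implicit.
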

\begin{proof} It follows from the relation
\begin{enumerate}
      \item
$$\begin{array}{l}\displaystyle \int_{\Lambda} \overline{M_{\bf i}[f](\xi)}g(\xi)d\mu_{\xi} =
\int_{\Lambda}    \overline{\left(  \int_{\Lambda_{\bf i}}  Q_{\bf i}  [\mathcal B(\xi, \zeta )] Q_{\bf i}[f](\zeta) d\sigma_\zeta \right) } g(\xi)d\mu_\xi\\
\\
\displaystyle
=\int_{\Lambda_{\bf i}} \overline{ Q_{\bf i}[f](\zeta)  }  \left(  \int_{\Lambda}      \mathcal B(\zeta, \xi )  g(\xi)d\mu_\xi  \right)   d\sigma_\zeta  =
 \int_{\Lambda_{\bf i}} \overline{ Q_{\bf i}[f](\zeta)  } Q_{\bf i}[ g](\zeta)     d\sigma_\zeta.
\end{array}$$

\item By setting $f=g$ in the previous identity we get the result.
     \end{enumerate}
\end{proof}

\end{document}